\numberwithin{equation}{section}
\theoremstyle{plain}
\newtheorem{theorem}{Theorem}[section]
\theoremstyle{plain}
\newtheorem{corollary}[theorem]{Corollary}
\theoremstyle{plain}
\newtheorem{lemma}[theorem]{Lemma}
\newtheorem*{lemma*}{Lemma}
\theoremstyle{plain}
\newtheorem{proposition}[theorem]{Proposition}
\theoremstyle{definition}
\theoremstyle{remark}
\theoremstyle{remark}
\theoremstyle{definition}
\theoremstyle{plain}
\newtheorem{conjecture}[theorem]{Conjecture}
\theoremstyle{definition}
\newcommand{\R}{\mathbb{R}}
\newcommand{\C}{\mathbb{C}}
\newcommand{\Rd}{\mathbb{R}^d}
\newcommand{\Z}{\mathbb{Z}}
\newcommand{\N}{\mathbb{N}}
\newcommand{\Lt}[1][d]{L^2(\R^{#1})}
\newcommand{\F}{\mathcal{F}}
\renewcommand{\l}{\lambda}
\renewcommand{\L}{\Lambda}
\renewcommand{\H}{\mathbb{H}}
\newcommandtwoopt{\xarrow}[2][0.5cm][0]{\mathrel{\rotatebox[origin=c]{#2}{$\xrightarrow{\rule{#1}{0pt}}$}}}
\begin{document}

\title[Maximal Theta Functions]{
		Maximal Theta Functions\\--\\Universal Optimality of the Hexagonal Lattice for Madelung-Like Lattice Energies
	}
\author[L.~B\'{e}termin]{Laurent B\'{e}termin}
\address{Institut Camille Jordan - Université Claude Bernard Lyon 1, 21 avenue Claude Bernard, 69622 Villeurbanne Cedex, France}
\email{betermin@math.univ-lyon1.fr}

\author[M.~Faulhuber]{Markus Faulhuber}
\address{Faculty of Mathematics, University of Vienna, Oskar-Morgenstern-Platz 1, 1090 Vienna, Austria}
\email{markus.faulhuber@univie.ac.at}

\thanks{Laurent B\'{e}termin was part of the Faculty of Mathematics, University of Vienna, Austria, at the time of writing and was supported by the Vienna Science and Technology Fund (WWTF) MA14-009 as well as the  Austrian Science Fund (FWF) project F65. Markus Faulhuber was with the Department of Mathematics, RWTH Aachen University, Germany at the time of writing and was partially supported by the Vienna Science and Technology Fund (WWTF) VRG12-009 and the Austrian Science Fund (FWF) P33217 and TAI6.}

\subjclass[2010]{}
\keywords{Lattice Theta Functions, Madelung Energy, Riemann Theta Function, Universal Optimality}

\begin{abstract}
	We present two families of lattice theta functions accompanying the family of lattice theta functions studied by Montgomery in [H.~Montgomery. Minimal theta functions. \textit{Glasgow Mathematical Journal}, 30(1):75--85, 1988]. The studied theta functions are generalizations of the Jacobi theta-2 and theta-4 functions. Contrary to Montgomery's result, we show that, among lattices, the hexagonal lattice is the unique maximizer of both families of theta functions. As an immediate consequence, we obtain a new universal optimality result for the hexagonal lattice among two-dimensional alternating charged lattices and lattices shifted by the center of their unit cell.
\end{abstract}

\maketitle

\tableofcontents

\section{Introduction and Main Results}\label{sec_Intro}

Theta functions are classical objects appearing in many branches of mathematics and in mathematical physics, e.g., Bose-Einstein Condensates \cite{AftBN,LuoWeiBEC}, vortices in superconductors \cite{Sandier_Serfaty}, chaotic eigenfunctions \cite{NonVor98}, string theory \cite{GannonThesis}, energetically optimal crystals \cite{BetSoftTheta,Bet16,BetKnu_Born_18,BeterminPetrache_DimensionReduction_2017,OptinonCM}, summation of lattice sums \cite{Latticesums}, algebraic geometry \cite{Bost}, transcendence theory \cite{Banaszczyk}, in coding or information theory \cite{ConSlo98, HytVes_Secrecy_2017, StrBea03} or the study of Gabor systems and frames with Gaussian window \cite{Faulhuber_Hexagonal_2018,Faulhuber_SampTA19,FaulhuberSteinerberger_Theta_2017,Jan96} or with hyperbolic secant window \cite{JanssenStrohmer_Secant_2002}.

A strong connection is given between theta functions and the sphere packing problem, a rather simple to understand, but in general hard to crack problem. The variety of applications of the sphere packing problem and connections to other problems is overwhelming. As a starting point, we refer the interested reader to the textbook of Conway and Sloane \cite{ConSlo98}. There are only few dimensions for which the sphere packing problem is completely solved. Of course, the enormous effort of Hales et al.\ to prove the Kepler Conjecture in dimension 3 needs to be mentioned at this point \cite{Hales_2005}, \cite{Hales_2017}, as well as the recent breakthrough of Viazovska \cite{Viazovska8_2017} in dimension 8 and, shortly after, in collaborative work of Cohn, Kumar, Miller, Radchenko and Viazovska in dimension 24 \cite{Viazovska24_2017}.

In \cite{CohKum07}, Cohn and Kumar studied the problem of universal optimality of point configurations on several manifolds, i.e. the optimality of structures among periodic ones with unit density for all Gaussian interactions $r\mapsto e^{-\alpha r^2}$, $\alpha>0$. For $\Rd$, this problem is closely related to, but much harder than the sphere packing problem and was solved in \cite{CohKum07} for dimension $d = 1$. In dimension 8 and 24, the universal optimality of the $\mathsf{E}_8$ and Leech lattice have been proven in \cite{Coh-Via19}. Also, it is remarked in \cite{Coh-Via19} that universal optimality of a point configuration, in particular of a lattice, is a rare feature and thus highly important.

As recalled in \cite[Sect.~9]{CohKum07}, it is known that universal optimality of a lattice cannot hold in dimension 3, 5, 6 and 7, (see \cite{Mar03}) by considering dual lattices of the optimal candidates. Therefore, the only dimensions up to dimension 8 where the problem is open are dimensions 2 and 4.

In dimension 2, the expected candidate to solve the problem of universal optimality (see \cite[Conj.~9.4]{CohKum07}) is the hexagonal lattice, which we denote by $\mathsf{A}_2$ in this work as it results from the $\mathsf{A}_2$ root system. The hexagonal lattice pops up as a known or conjectured optimal solution in several different contexts, and besides the already mentioned literature, it appears in the study of white noise spectrograms \cite{Fla17} (see also \cite{BarFlaCha18}), the study of the heat kernel with varying metric \cite{Baernstein_HeatKernel_1997, Faulhuber_Rama_2020}, or in the study of determinants of Laplace-Beltrami operators \cite{Faulhuber_Determinants_2020, Osgood_Determinants_1988}.

Regarding the expected universal optimality of the hexagonal lattice, we already know, due to the work of Montgomery \cite{Montgomery_Theta_1988}, that the hexagonal lattice is universally optimal among lattices. Montgomery's result is formulated for quadratic forms and he shows that a certain lattice theta function is minimal if and only if the quadratic form comes from the hexagonal lattice. As recalled in \cite{Henn}, this optimality result was previously proved for the Epstein zeta function (i.e., for inverse power-law interactions $r\mapsto r^{-s}$, $s>0$) by Rankin \cite{Ran53}, Ennola \cite{Enn64}, Cassels \cite{Cas59} and Diananda \cite{Dia64} and turns out to be a simple consequence of the universal optimality of $\mathsf{A}_2$ among lattices. Furthermore, the family of theta functions studied by Montgomery can be seen as a generalization of the (lattice) Jacobi $\theta_3$-function.

In this work, we study two families of 2-dimensional lattice theta functions, which are accompanying the family of theta functions studied by Montgomery \cite{Montgomery_Theta_1988}. We call these families of functions the \textit{centered} and \textit{alternating lattice theta functions} which are defined for all $\alpha>0$, respectively, by

\begin{equation}\label{eq_theta_c}
	\theta^c_{\L} (\alpha) = \sum_{k,l \in \Z} e^{-\tfrac{\pi \alpha}{y} \left( \left( k + \tfrac{1}{2} \right)^2 + 2 x \left(k + \tfrac{1}{2} \right) \left(l + \tfrac{1}{2} \right) + \left(x^2 + y^2\right) \left(l + \tfrac{1}{2} \right)^2 \right)}
\end{equation}
and 
\begin{equation}\label{eq_theta_+-}
	\theta^\pm_\L (\alpha) = \sum_{k,l \in \Z} (-1)^{k+l} e^{-\tfrac{\pi \alpha}{y} \left( k^2 + 2 x k l + \left(x^2 + y^2\right) l^2 \right)},
\end{equation}
for a 2-dimensional lattice $\L$ of volume 1, parametrized by $(x,y) \in \R \times \R_+$. For a detailed explanation on lattice parametrization in dimension 2 see Section \ref{sec:parameters}.

We remark that, in the formulas above, $c$ stands for the center of a \textit{fundamental cell} of the lattice $\L$. This fundamental cell is the parallelogram spanned by the vectors of the Minkowski basis of $\L$. This means that, the 2-dimensional lattice $\theta^c_\L$-function is expressible by \eqref{eq_theta_c} as long as $|x| \leq \tfrac{1}{2}$ and $x^2+y^2 \geq 1$, $y > 0$. Furthermore, the $\pm$ expresses that the sign alternates from one lattice point to another. The presented lattice theta functions $\theta^c$ and $\theta^\pm$ can also be seen as a generalization of the classical Jacobi $\theta_2$ and $\theta_4$ nulls, respectively. This generalization can also be taken to higher dimensions. We will explain the subtleties later on in this work.

By using the Poisson summation formula (and an index transform), it becomes apparent that these functions fulfill the following functional equation;
\begin{equation}\label{eq_fcteqtheta}
	\theta_\L^c(\alpha) = \tfrac{1}{\alpha} \, \theta_\L^\pm \left( \tfrac{1}{\alpha} \right), \quad \forall \alpha>0.
\end{equation}

Our main result on $\theta^c_\L$ and $\theta^\pm_\L$ is analogous, but contrary, to Montgomery's main result.
\begin{theorem}[Main Result]\label{thm_main}
	For any $\alpha > 0$ and any 2-dimensional lattice $\L$ of volume 1, we have
	\begin{equation}
		\theta^c_{\mathsf{A}_2}(\alpha) \geq \theta^c_\L(\alpha)
		\qquad \textnormal{ and } \qquad
		\theta^\pm_{\mathsf{A}_2}(\alpha) \geq \theta^\pm_\L(\alpha)
	\end{equation}
	with equality if and only if $\L$ is a rotated version of the hexagonal lattice $\mathsf{A}_2$, i.e.,
	\begin{equation}
		\L = Q \mathsf{A}_2 \Z^2, \, Q \in SO(2,\R)
		\quad \text{ with } \quad
		\mathsf{A}_2 =
		\sqrt{\tfrac{2}{\sqrt{3}}}
	\begin{pmatrix}
		1 & \tfrac{1}{2}\\
		0 & \tfrac{\sqrt{3}}{2}
	\end{pmatrix}
	\Z^2.
	\end{equation}
\end{theorem}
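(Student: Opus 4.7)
The functional equation \eqref{eq_fcteqtheta} relates the two theta families on the \emph{same} lattice $\L$ via $\theta^c_\L(\alpha) = \tfrac{1}{\alpha}\theta^\pm_\L(1/\alpha)$. Applying this on both sides of the inequalities in Theorem \ref{thm_main}, the claims for $\theta^c$ and $\theta^\pm$ become equivalent under the substitution $\alpha \leftrightarrow 1/\alpha$, together with the corresponding equality cases. It therefore suffices to prove only one of them, and I would focus on the alternating one: $\theta^\pm_{\mathsf{A}_2}(\alpha) \geq \theta^\pm_\L(\alpha)$ for all $\alpha > 0$ and all volume-$1$ lattices $\L$.

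\textbf{Step 2 (Sublattice decomposition).} Grouping the summands in \eqref{eq_theta_+-} by the parity of $k+l$, I introduce the index-$2$ \emph{even-sum} sublattice $\L^{(1)} := \{ke_1+le_2 : k+l \in 2\Z\}$ (of volume $2$) and its unit-volume rescaling $M_\L := \L^{(1)}/\sqrt{2}$. One obtains the clean identity
$$\theta^\pm_\L(\alpha) \;=\; 2\,\theta_{\L^{(1)}}(\alpha) \;-\; \theta_\L(\alpha) \;=\; 2\,\theta_{M_\L}(2\alpha) \;-\; \theta_\L(\alpha).$$
Montgomery's theorem controls the last term in the correct direction: $-\theta_\L(\alpha) \leq -\theta_{\mathsf{A}_2}(\alpha)$, with strict inequality unless $\L$ is a rotation of $\mathsf{A}_2$.

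\textbf{Step 3 (Main obstacle).} The genuine difficulty is to bound the first term $2\theta_{M_\L}(2\alpha)$ from \emph{above}, whereas Montgomery supplies only lower bounds; worse, the map $\L \mapsto M_\L$ does not fix hexagonality. Indeed, applying the change of variables $(k,l) = (m+n,\,m-n)$ to the hexagonal form $k^2+kl+l^2$ gives $3m^2+n^2$, so $M_{\mathsf{A}_2}$ is in fact the $\sqrt{3}$-rectangular lattice (modular parameter $\tau = i\sqrt{3}$), not hexagonal. Consequently Montgomery's theorem cannot be inserted in a single step to conclude, and this is where the bulk of the work lies.

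\textbf{Step 4 (Strategy for the obstacle).} To close the gap, I would study the composite functional $(x,y) \mapsto 2\theta_{M_\L}(2\alpha) - \theta_\L(\alpha)$ directly on the Minkowski fundamental domain $\{(x,y) : |x| \leq \tfrac{1}{2},\, x^2+y^2 \geq 1,\, y > 0\}$: identify critical points via the residual symmetries of the Minkowski basis together with the enhanced automorphism group at the hexagonal point, analyze the degenerate boundary (the limit $y \to \infty$ forces $\theta^\pm_\L \to 0$ after Poisson summation of the inner sum), and eliminate competing interior critical points — notably the square lattice $(0,1)$ — by a second-variation analysis carried out uniformly in $\alpha > 0$. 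An alternative approach I would pursue in parallel is the representation-number route: expand $\theta^\pm_\L(\alpha) = \sum_{n \geq 0} r^\pm_\L(n)\, e^{-\pi\alpha n/y}$ with $r^\pm_\L(n) := \sum_{Q(k,l)=n}(-1)^{k+l}$ and attempt the termwise inequality $r^\pm_\L(n) \leq r^\pm_{\mathsf{A}_2}(n)$ for every $n \geq 0$, which would yield the theta inequality uniformly in $\alpha > 0$ and mirror Montgomery's original combinatorial strategy adapted to the alternating signature.
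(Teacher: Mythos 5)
Your Steps 1--3 are correct as far as they go: the functional equation \eqref{eq_fcteqtheta} does reduce the two inequalities to one another, the parity decomposition $\theta^\pm_\L(\alpha) = 2\,\theta_{\L^{(1)}}(\alpha) - \theta_\L(\alpha)$ is a valid identity, and you have correctly diagnosed why it does not close the argument: the even-sum sublattice of $\mathsf{A}_2$ rescales to the rectangular lattice with $\tau = i\sqrt{3}$, so Montgomery's theorem controls only the subtracted term and the map $\L \mapsto M_\L$ destroys hexagonality. But that diagnosis is where your proof ends; Step 4 is a list of strategies, not an argument, and this is precisely where all the content of the theorem lives. Worse, one of your two proposed routes is not well-posed: the ``termwise'' comparison $r^\pm_\L(n) \leq r^\pm_{\mathsf{A}_2}(n)$ requires the two theta series to be supported on the same set of exponents, but for a general unit-volume lattice the values of the quadratic form $q_\L(k,l)$ form a lattice-dependent (generically dense, incommensurable) subset of $\R_+$, so there is no common index $n$ over which to compare coefficients. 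The other route (critical-point and second-variation analysis on the fundamental domain, uniformly in $\alpha$) is exactly the hard global problem restated, with no indication of how to rule out competing critical points such as the square lattice for every $\alpha>0$.

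The paper does not use the sublattice decomposition at all. Instead it proves two monotonicity statements on the fundamental domain: first, that $\partial_x \theta^c_\L(\alpha) > 0$ and $\partial_x \theta^\pm_\L(\alpha) > 0$ for $x \in (0,\tfrac12)$ and $y \geq \tfrac{1}{\sqrt2}$ (Lemmas \ref{lem_Main1_c} and \ref{lem_Main1_pm}, proved for $\alpha \geq 1$ via explicit two-sided bounds $A_2, B_2$ on the auxiliary function $Q_2(\beta;t) = -\partial_\beta\widehat\vartheta_2(\beta;t)/\sin(\pi\beta)$ in the spirit of Montgomery, then extended to all $\alpha>0$ by the functional equation --- which is why \emph{both} families must be treated before the duality can be invoked); and second, that on the boundary $x=\tfrac12$ the function factorizes as $\theta^c_\L(\alpha) = \tfrac12\,\theta_2(\alpha y)\,\theta_2(\tfrac{\alpha}{4y})$, whence monotone decrease in $y \geq \tfrac{\sqrt3}{2}$ follows from the univariate product inequality of Faulhuber--Steinerberger (Proposition \ref{pro_fauste}). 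Pushing the maximizer to $x=\tfrac12$ and then down to $y=\tfrac{\sqrt3}{2}$ gives the hexagonal lattice with the stated uniqueness. None of this quantitative machinery --- the product representation of $\theta_2(z;\tau)$, the bounds on $Q_2$, the sine-quotient estimates, the factorization at $x=\tfrac12$ --- appears in your proposal, so the proof as written has a genuine and essential gap.
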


A remarkable consequence is that Theorem \ref{thm_main} immediately generalizes to a large class of lattice energies. For a lattice $\L = S \Z^2$, with $S = (v_1, v_2)$, where $v_1$, $v_2$ are column vectors yielding a Minkowski basis for $\L$, and for any function $f:\R_+\to \R$, we define the \textit{Madelung-like $f$-energy} of $\L$ by
\begin{equation}\label{eq_Epm}
	E_f^\pm[\L] := \sum_{\substack{(k,l)\in \Z^2\\(k,l)\neq (0,0)}} (-1)^{k+l}f(|kv_1+l v_2|^2).
\end{equation}
The term Madelung-like energy is inspired by its connection to Madelung's constant for the rock-salt structure of Sodium Chloride NaCl (see e.g., \cite{BFK20})
\begin{equation}\label{eq_Madelung_constant}
	M_{\text{Na}} = - M_{\text{Cl}} = \sum_{\substack{(m,n,p) \in \Z^3\\(m,n,p)\neq (0,0,0)}} \frac{(-1)^{m+n+p}}{\sqrt{m^2+n^2+p^2}}.
\end{equation}
This corresponds to the electrostatic energy of an alteration of charges $\pm 1$ on cubic lattice sites originally calculated by Madelung in \cite{Madelung}, which has great similarity to computing energies for other ionic crystals with alternation of charges and different interaction potentials $f$ (see e.g., \cite{BorweinMadelung} for the inverse power law case). We remark that the series in \eqref{eq_Madelung_constant} converges only conditionally and that regularization methods, such as the Ewald summation formula should be used for its computation (see also Section \ref{sec_univopt} and \cite{Tosi}).

We also notice that in \eqref{eq_Epm} as well as in \eqref{eq_Madelung_constant}, unlike in the definition of $\theta_\L^\pm(\alpha)$, the origin is removed from the summation. This choice of defining $E_f^\pm[\L]$ is necessary in order to cover physically relevant interaction potentials with a singularity at $r=0$, like the inverse power laws $f(r)=r^{-s}$, $s>0$. Furthermore, if $f(0)$ exists, its value stays invariant among lattices (and can therefore be ignored).

Similarly, by defining the center of the fundamental cell by $c:=(v_1+v_2)/2$, we define the \textit{lattice center $f$-energy} of $\L$
\begin{equation}
	E_f^c[\L]:=\sum_{(k,l)\in \Z^2} f(|kv_1+l v_2+c|^2).
\end{equation}
In the Gaussian case, this formula is dual to $E_f^\pm$ by the Poisson summations formula. We notice that the origin is not removed from the summation defining $E_f^c[\L]$ since $f(|c|^2)$ is not invariant among lattices and $c\neq 0$, which again allows us to choose an interaction potential $f$ singular at the origin. In both cases, $f: \R_+ \to \R$ is called the potential function, which we assume to be a completely monotone function, i.e., for all $r>0$ and all $k\in \N$, $(-1)^k f^{(k)}(r)\geq 0$.  Assuming sufficiently fast decay at infinity ensures the summability of the above expressions. Following the definition of Cohn and Kumar in \cite{CohKum07}, our main result shows that the hexagonal lattice is \textit{universally optimal among lattices} for this type of energy.

\begin{corollary}[Universal optimality among lattices]\label{cor_cm}
	Let $f$ be a completely monotone function such that $|f(r)|=O(r^{-1-\varepsilon})$ as $r\to +\infty$ for some $\varepsilon>0$. Then, for any 2-dimensional lattice $\L$ of volume 1,
	\begin{equation}
		E_f^\pm[\mathsf{A}_2]\geq E_f^\pm[\L] \qquad \textnormal{ and } \qquad E_f^c[\mathsf{A}_2]\geq E_f^c[\L],
	\end{equation}
	with equality if and only if $\L$ is a rotated version of the hexagonal lattice $\mathsf{A}_2$, i.e.,
	\begin{equation}
		\L = Q \mathsf{A}_2, \, Q \in SO(2,\R)
		\quad \text{ with } \quad
		\mathsf{A}_2 =
		\sqrt{\tfrac{2}{\sqrt{3}}}
	\begin{pmatrix}
		1 & \tfrac{1}{2}\\
		0 & \tfrac{\sqrt{3}}{2}
	\end{pmatrix}
	\Z^2.
	\end{equation}
	Furthermore, for any $2$-dimensional lattice $\L$ of volume 1, we have 
	\begin{equation}\label{eq_Efneq}
	E_f^\pm[\L]<0.
	\end{equation}
\end{corollary}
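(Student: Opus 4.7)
\textbf{Proof plan for Corollary \ref{cor_cm}.}

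The plan is to combine Bernstein's theorem on completely monotone functions with the pointwise inequalities of Theorem \ref{thm_main}. By Bernstein's theorem, every completely monotone $f$ admits a Laplace representation $f(r) = \int_0^\infty e^{-tr}\,d\mu_f(t)$ for a positive Borel measure $\mu_f$ on $[0,\infty)$; the decay hypothesis $|f(r)| = O(r^{-1-\varepsilon})$ forces $f(+\infty) = 0$, so $\mu_f(\{0\}) = 0$, and it simultaneously ensures absolute summability of the lattice series defining $E_f^c[\L]$ and $E_f^\pm[\L]$.

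Substituting this representation into the definitions of the Madelung-like energies and swapping summation and integration (legitimate by Fubini--Tonelli thanks to the absolute summability just noted), I obtain
\begin{equation*}
E_f^c[\L] = \int_0^\infty \theta_\L^c(t/\pi)\,d\mu_f(t) \qquad \text{and} \qquad E_f^\pm[\L] = \int_0^\infty \bigl(\theta_\L^\pm(t/\pi) - 1\bigr)\,d\mu_f(t),
\end{equation*}
the subtracted $1$ in the alternating case accounting for the origin term $(k,l)=(0,0)$ which contributes $+1$ to $\theta_\L^\pm$ but is excluded from $E_f^\pm$. Applying Theorem \ref{thm_main} pointwise in $t$ and integrating against the positive, nonzero measure $\mu_f$ then immediately yields the maximality inequalities $E_f^c[\mathsf{A}_2] \geq E_f^c[\L]$ and $E_f^\pm[\mathsf{A}_2] \geq E_f^\pm[\L]$, with equality if and only if $\L$ is a rotation of $\mathsf{A}_2$.

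For the strict negativity $E_f^\pm[\L] < 0$, I would reduce, via $\theta_\L^\pm \leq \theta_{\mathsf{A}_2}^\pm$ from Theorem \ref{thm_main}, to proving the pointwise upper bound $\theta_{\mathsf{A}_2}^\pm(\alpha) < 1$ for every $\alpha > 0$; the alternating displayed formula then gives $E_f^\pm[\L] < 0$ because $\mu_f$ is positive and not identically zero. To establish $\theta_{\mathsf{A}_2}^\pm(\alpha) < 1$, I would exploit the Poisson-dual representation
\begin{equation*}
\theta_\L^\pm(\alpha) = \alpha^{-1}\sum_{w\in\L+c} e^{-\pi|w|^2/\alpha}, \qquad c = \tfrac{1}{2}(v_1+v_2),
\end{equation*}
which follows directly from the functional equation \eqref{eq_fcteqtheta}, together with the boundary asymptotics $\theta_{\mathsf{A}_2}^\pm(\alpha) \to 0^+$ as $\alpha \to 0^+$ (from the Poisson formula above, since $c$ sits strictly away from the lattice) and $\theta_{\mathsf{A}_2}^\pm(\alpha) \to 1^-$ as $\alpha \to \infty$ (from the signed nearest-neighbor expansion, where the shortest vectors carry predominantly negative signs).

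The main obstacle is precisely this global pointwise inequality $\theta_{\mathsf{A}_2}^\pm(\alpha) < 1$ on the entire half-line: both endpoint asymptotics are immediate, but ruling out an intermediate excursion to or above $1$ requires an additional ingredient. I expect this can be handled by a term-by-term comparison of the even- and odd-parity cosets of an index-two sublattice of $\L$ (splitting $\theta_\L^\pm$ as $\theta_{\L_e} - \theta_{v_1+\L_e}$ and exploiting that $v_1$ is a Minkowski-shortest vector, so no vector of $\L_e\setminus\{0\}$ is shorter than $v_1$), or by a direct product representation of $\theta_{\mathsf{A}_2}^\pm$ via Jacobi theta functions specific to the hexagonal lattice. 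Once that inequality is established, the remainder of the corollary is a standard Laplace-transform reduction to Theorem \ref{thm_main}.
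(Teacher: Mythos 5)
Your reduction of the optimality statements to Theorem \ref{thm_main} is exactly the paper's argument: Hausdorff--Bernstein--Widder, absolute summability from the decay hypothesis, the integral representations with the subtracted $1$ in the alternating case, and pointwise application of the theta-function inequalities. That part is complete and correct.

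The gap is in the negativity claim. You correctly reduce \eqref{eq_Efneq} to the pointwise inequality $\theta^\pm_{\mathsf{A}_2}(\alpha)<1$ for all $\alpha>0$, but you then leave this as an acknowledged obstacle, and the two endpoint asymptotics you compute do not exclude an interior excursion to or above $1$ --- so as written the proof is not finished. The second route you gesture at (a product representation specific to the hexagonal lattice) is in fact the paper's route, and the needed identity is already available from the proof of the Second Main Lemma: equation \eqref{eq_thetacy1/2} at $x=\tfrac12$, $y=\tfrac{\sqrt3}{2}$ gives
\begin{equation}
\theta^c_{\mathsf{A}_2}(\alpha)=\tfrac{1}{2}\,\theta_2\!\left(\tfrac{\sqrt{3}\alpha}{2}\right)\theta_2\!\left(\tfrac{\alpha}{2\sqrt{3}}\right)
=\tfrac{1}{\alpha}\,\theta_4\!\left(\tfrac{2}{\sqrt{3}\alpha}\right)\theta_4\!\left(\tfrac{2\sqrt{3}}{\alpha}\right),
\end{equation}
where the second equality is the one-dimensional Jacobi identity relating $\theta_2$ and $\theta_4$. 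Since the product representation $\theta_4(t)=\prod_{k\geq1}(1-e^{-2\pi kt})(1-e^{-(2k-1)\pi t})^2$ shows $\theta_4(t)<1$ for every $t>0$, one gets $\theta^c_{\mathsf{A}_2}(\alpha)<\tfrac{1}{\alpha}$, which by the functional equation \eqref{eq_fcteqtheta} is precisely $\theta^\pm_{\mathsf{A}_2}(\alpha)<1$. Your alternative idea of splitting $\theta^\pm_\L$ over an index-two sublattice would require a genuine new argument and is not needed; the factorization above closes the gap in two lines.
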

We believe that \eqref{eq_Efneq} could still hold in higher dimension. Besides being an interesting mathematical property, this kind of result plays a role for instance when looking for ground states among ionic crystals, in particular in the inverse power-law case (see e.g. \cite[Thm. 2.4 and Rmk. 2.5]{BFK20}). Furthermore, we remark that a similar result to Corollary \ref{cor_cm}, involving \textit{local} maximality of the hexagonal lattice among lattices, has been established in \cite{FauSte19}. It might well be true that the result in \cite{FauSte19} holds for completely monotone functions, as the technical assumptions there look as if they can be tailored to suit completely monotone functions.


In analogy to the energy minimization problem considered, e.g., in \cite{Coh-Via19}, where it is also conjectured that the hexagonal lattice is universally optimal, we can conjecture that $\mathsf{A}_2, \mathsf{E}_8$ and the Leech lattice $\L_{24}$ are also universally optimal among periodic configurations with charges $\pm 1$. Following \cite{Coulangeon:2010uq}, we call $\mathcal{A}_N^1$ the space of periodic configurations with unit density such that there is an even number $N\in 2\N$ of points per period, i.e., $\mathcal{A}_N^1$ is the set of all $\mathcal{C}\subset \R^d$ of the form
\begin{equation}\label{eq:periodic}
	\mathcal{C}=\bigsqcup_{i=1}^{N} (t_i+\L), \quad \textnormal{$\L$ is a lattice of covolume $\frac{1}{N}$.}
\end{equation}
Furthermore, defining $T_N:=\{t_1,...,t_{N}\}\subset \R^d$ and given $\mathcal{C}\in \mathcal{A}_N^1$, we call $\Phi_N(\mathcal{C})$ the set of all the periodic and neutral distributions of charges $\{\pm 1\}$ on $\mathcal{C}$, i.e. all the functions
\begin{equation}\label{eq:phi}
\varphi: T_N\to \{-1,1\} \quad \textnormal{such that}\quad \varphi(t_i+u)=\varphi(t_i), \, \forall u\in \L, \quad \textnormal{and}\quad \sum_{i=1}^N \varphi(t_i)=0.
\end{equation}

\begin{conjecture}[Universal optimality]
Let $f:(0,\infty)\to \R_+$ be completely monotone and such that $|f(r)|=O(r^{-d/2-\varepsilon})$ as $r\to +\infty$ for some $\varepsilon>0$. For any periodic point configurations $\mathcal{C}\in \mathcal{A}^1_N$ defined by \eqref{eq:periodic}, we define its Madelung $f$-energy by
\begin{equation}
	E_f^\pm[\mathcal{C}]:= \min_{\varphi\in \Phi_N(\mathcal{C})} E_f[\mathcal{C},\varphi],\quad \textnormal{where}\quad  E_f[\mathcal{C},\varphi]:=\frac{1}{N}\sum_{i=1}^N \sum_{q\in \mathcal{C}\backslash \{t_i\}} \varphi(t_i) \varphi(q)  f(|t_i-q|^2).
\end{equation}
Then, for all $N\in 2\N$, $\mathsf{A}_2,\mathsf{E}_8$ and the Leech lattice are the unique maximizer of $E_f^\pm$ on $\mathcal{A}_N^1$ in the respective dimensions $d \in \{2,8,24\}$.
\end{conjecture}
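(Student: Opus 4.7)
The plan is to adapt the linear-programming framework of Cohn-Elkies and the modular-form magic functions of Viazovska and Cohn-Kumar-Miller-Radchenko-Viazovska to the alternating-charge setting. A first reduction uses the Hausdorff-Bernstein-Widder theorem to write any completely monotone $f$ as $f(r)=\int_0^\infty e^{-\alpha r}\,d\mu(\alpha)$ with $\mu\geq 0$, so that proving the result for every Gaussian $f_\alpha(r)=e^{-\alpha r}$ would yield the general case, provided one can interchange the minimization over $\varphi$ with the Gaussian superposition. This commutation is justified once the optimal $\varphi$ on $\mathsf{A}_2,\mathsf{E}_8,\Lambda_{24}$ is shown to be the canonical checkerboard pattern independent of $\alpha$, which should follow from the lattice symmetries and is consistent with Theorem \ref{thm_main} in the $N=2$ case.

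For a Gaussian and a periodic $\mathcal{C}=\bigsqcup_{i=1}^{N}(t_i+\L)$ I would expand
$$E_{f_\alpha}[\mathcal{C},\varphi]=\frac{1}{N}\sum_{i,j=1}^{N}\varphi(t_i)\varphi(t_j)\,\Theta_\L(\alpha;t_i-t_j)$$
in terms of shifted lattice theta series $\Theta_\L(\alpha;t)=\sum_{v\in\L}e^{-\alpha|v+t|^{2}}$ (with the $i=j$ term modified to exclude $v=0$). For the proposed optimizer $(\mathsf{A}_2,\varphi^\star)$ with $N=2$ this collapses to a linear combination of the theta functions $\theta^\pm$ and $\theta^c$ from \eqref{eq_theta_c}-\eqref{eq_theta_+-}, so Theorem \ref{thm_main} already settles the optimization over the lattice parameter once $N$ and $\varphi$ are fixed. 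The new content of the conjecture is the control of the dependence on $N$ and on the charge assignment $\varphi$, which requires a genuine linear-programming bound.

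Such a bound would come from Poisson-summing per coset: $E_{f_\alpha}[\mathcal{C},\varphi]$ becomes a sum over the dual lattice weighted by the structure factor $S_\varphi(y)=\sum_i\varphi(t_i)e^{2\pi i\langle y,t_i\rangle}$, which satisfies $|S_\varphi(y)|^{2}\leq N^{2}$ and $S_\varphi(0)=0$ by neutrality. One then seeks a radial Schwartz $g:\R^d\to\R$ with $g\geq f_\alpha$ pointwise and $\widehat{g}$ of specified sign, so that the dual-side Poisson expression yields an upper bound on $E_{f_\alpha}^\pm[\mathcal{C}]$ saturated exactly by the claimed optimizer. Uniqueness would then follow from the rigidity of the equality case, which forces the Fourier support of $\mathcal{C}$ to coincide with the dual of $\mathsf{A}_2,\mathsf{E}_8$ or $\Lambda_{24}$.

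The main obstacle is the construction of these magic functions in the presence of alternating charges. In dimensions $8$ and $24$, one would extend the Fourier-interpolation formulas of Radchenko-Viazovska and the Cohn-Kumar-Miller-Radchenko-Viazovska construction from the full lattice to half-integral translates, which requires raising the level of the input quasi-modular forms to a congruence subgroup of level $2$ (such as $\Gamma(2)$ or $\Gamma_0(2)$) to detect the bipartition of the lattice into two cosets; this is technically delicate but plausibly within reach of the existing machinery. In dimension $2$ the situation is entirely open: no magic function is known even for the unsigned universal optimality of $\mathsf{A}_2$ (Conjecture 9.4 of Cohn-Kumar), so a proof of our conjecture for $d=2$ would in particular resolve that long-standing open problem.
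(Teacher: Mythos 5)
This statement is a \emph{conjecture} in the paper: the authors deliberately leave it unproven, offering only supporting evidence (Corollary \ref{cor_cm} for the lattice case $N=2$, the results of \cite{BetKnu_Born_18} on optimal charge assignments, and the one-dimensional results of \cite{Crystbinary1d}). Your proposal is not a proof but a research programme, and you concede as much in your final paragraph when you note that in dimension $2$ no magic function is known even for the \emph{unsigned} universal optimality of $\mathsf{A}_2$ (Conjecture 9.4 of \cite{CohKum07}), so that the central analytic ingredient of your argument does not exist. A sketch whose key lemma is itself a famous open problem cannot be accepted as a proof of the statement.

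Beyond that global issue, two intermediate steps are also gaps rather than routine verifications. First, the reduction from completely monotone $f$ to Gaussians does not commute with the minimization over $\varphi$: in general $\min_\varphi \int E_{f_\alpha}[\mathcal{C},\varphi]\,d\mu(\alpha) \neq \int \min_\varphi E_{f_\alpha}[\mathcal{C},\varphi]\,d\mu(\alpha)$, and your fix --- that the optimal $\varphi$ is a fixed ``checkerboard'' independent of $\alpha$ --- is only known for simple lattices satisfying additional hypotheses (this is precisely the content of \cite{BetKnu_Born_18}), not for an arbitrary $\mathcal{C}\in\mathcal{A}_N^1$ with $N>2$, which is exactly the regime the conjecture is about. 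Second, the linear-programming step for signed configurations is not a straightforward adaptation of Cohn--Elkies: the neutrality condition $S_\varphi(0)=0$ kills the usual main term at the origin of the dual sum, so the standard sign conditions on $g$ and $\widehat g$ no longer produce a bound of the right form, and the required level-$2$ modular constructions in dimensions $8$ and $24$ have not been carried out. If you wish to contribute here, the honest framing is to present this as a strategy for attacking the conjecture, clearly separating what is known (the $N=2$ lattice case, settled by Theorem \ref{thm_main} and Corollary \ref{cor_cm}) from what remains open.
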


This conjecture is supported by Corollary \ref{cor_cm} as well as the recent work of the first author and Kn\"upfer \cite{BetKnu_Born_18} where the alternate configuration of charges $\{\pm 1\}$ has been shown to minimize $\varphi\mapsto E_f[\L,\varphi]$ in the simple lattice case and for a large set of lattices $\L$ with charges satisfying some simple assumptions. Furthermore, the same conjecture has been stated in \cite{Crystbinary1d} for the one-dimensional case and solved in the same paper for a small class of potentials, including the inverse power laws.

For general countable point sets of a fixed density and the centered $f$-energy, we put a conjecture at the end of this work in Section \ref{sec:parameters}.

\subsection{Preliminaries and Notation}

We will now provide some more background information on different kinds of theta functions and state some preliminary results. We start with the work of Montgomery \cite{Montgomery_Theta_1988}, which is the motivating article for this work, where the following family of lattice theta functions was studied;
\begin{equation}\label{eq_theta_Montgomery}
	\theta_q(\alpha) = \sum_{k,l \in \Z} e^{-\pi \alpha \, q(k,l)}, \quad \alpha \in \R_+,
\end{equation}
where $q(k,l) = a k^2 + b k l + c l^2$ is a positive definite quadratic form with discriminant $\tfrac{b^2}{4} - a c = -1$.\footnote{We have hidden the factor 2 appearing in the exponent in Montgomery's article \cite{Montgomery_Theta_1988} in the discriminant.} We will now explain how Montgomery's family of theta functions is a natural generalization of the lattice Jacobi $\theta_3$-function. This motivates the definition of $\theta_\L^c$ and $\theta_\L^\pm$, which can then be seen as natural generalizations of the Jacobi $\theta_2$ and $\theta_4$-function, respectively.

All functions under consideration are actually restricted Riemann theta functions (see, e.g., \cite[Chap.~II]{TataI}), which are natural generalizations of the classical Jacobi $\Theta_3$- function. A Riemann theta function is a function of the form
\begin{equation}
	\Theta^g(z; \tau) = \sum_{k \in \Z^g} e^{ \pi i k \cdot \tau k} e^{2 \pi i k \cdot z},
\end{equation}
where $z \in \C^g$ and $\tau \in \H^g$, the Siegel upper half-space of genus $g \in \N$, which is an extension of the upper half-plane $\H = \H^1$ (see also \cite[Chap.~II]{TataI});
\begin{equation}
	\H^g = \{ F \in GL(g, \C) \mid F^T = F, \, \Im(F) > 0\}.
\end{equation}
Here, $\Im(F) > 0$ means that the imaginary part of the matrix $F$ is positive definite. Also, we used the notation $z_1 \cdot z_2 = \overline{z_1}^T z_2$ for the inner product of two column vectors $z_1, z_2 \in \C^g$.

If $g=1$, the Riemann theta function is the classical Jacobi $\Theta_3$-function
\begin{equation}
	\Theta_3(z; \tau) = \sum_{k \in \Z} e^{ \pi i \tau k^2} e^{2 \pi i k z} \quad (z,\tau) \in \C \times \H .
\end{equation}
Accompanying, we have the Jacobi $\Theta_2$- and Jacobi $\Theta_4$-function, given by
\begin{equation}
	\Theta_2(z;\tau) = \sum_{k \in \Z} e^{\pi i \tau (k+\frac{1}{2})^2} e^{2 \pi i (k+\frac{1}{2}) z}, \quad (z,\tau) \in \C \times \H
\end{equation}
and
\begin{equation}
	\Theta_4(z;\tau) = \sum_{k \in \Z} (-1)^k e^{\pi i \tau k^2} e^{2 \pi i k z}, \quad (z,\tau) \in \C \times \H ,
\end{equation}
respectively. The Jacobi $\Theta_1$-function is of less significant role for this work and will not be stated or studied here, as its so-called null vanishes identically (because it is an odd function in $z$ and we also refer to the textbook of Whittaker and Watson at this point \cite{WhiWat69}). The so-called nulls (or theta constants) are of special interest for this work. They are obtained by setting $z = 0$;
\begin{align}
	\vartheta_2(\tau) & = \Theta_2(0;\tau) = \sum_{k \in \Z} e^{\pi i \tau (k+\frac{1}{2})^2}\\
	\vartheta_3(\tau) & = \Theta_3(0;\tau) = \sum_{k \in \Z} e^{\pi i \tau k^2}\\
	\vartheta_4(\tau) & = \Theta_4(0;\tau) = \sum_{k \in \Z} (-1)^k e^{\pi i \tau k^2} .
\end{align}
By the Poisson summation formula, we have the functional equations, also known as Jacobi identities;
\begin{equation}
	\vartheta_2(\tau) = \frac{1}{\sqrt{-i \tau}} \, \vartheta_4(-\tfrac{1}{\tau})
	\quad \text{ and } \quad
	\vartheta_3(\tau) = \frac{1}{\sqrt{-i \tau}} \, \vartheta_3(-\tfrac{1}{\tau}).
\end{equation}

In this work, however, we are interested in 2-dimensional lattices. For more details on lattices and their importance throughout different fields (also outside of mathematics) we refer to the textbook of Conway and Sloane \cite{ConSlo98}. We note that a 2-dimensional lattice is a discrete, co-compact subgroup of $\R^2$. It will be no restriction to only consider lattices of (co-)volume 1. Such a lattice can be described by a matrix $S \in SL(2,\R)$;
\begin{equation}
	\L = S \Z^2 = \{k v_1 + l v_2 \mid k,l \in \Z \},
\end{equation}
where $v_1$ and $v_2$ are the columns of $S$. We note that $S$ is not unique, which will be described below and in more detail in Section \ref{sec:parameters}. Nonetheless, by a QR-decomposition, we can always achieve that
\begin{equation}\label{eq_generating_matrix}
	S = y^{-1/2} \, Q
	\begin{pmatrix}
		1 & x\\
		0 & y
	\end{pmatrix}, \qquad Q \in SO(2,\R), \; y > 0.
\end{equation}
The quadratic form associated to the lattice $\L$ is given by the Gram matrix of $S$;
\begin{equation}\label{eq_quadratic_form}
	q_\L(k,l) = (k,l) \, S^T S
	\begin{pmatrix}
		k\\
		l
	\end{pmatrix}
	= \frac{1}{y} \left( k^2 + 2 x k l + (x^2+y^2) l^2 \right).
\end{equation}
In particular, if $S^T S =
	\begin{pmatrix}
		a & \tfrac{b}{2}\\
		\tfrac{b}{2} & c
	\end{pmatrix}
$ has determinant 1, then the quadratic form is $q(k,l) = a k^2 + b kl + c l^2$ and has discriminant $-1$. We note that the quadratic form is independent of the orthogonal matrix $Q \in SO(2,\R)$, as $(QS)^T(QS) = S^T S$. In the sequel, we will only consider lattices with a generator matrix of type \eqref{eq_generating_matrix}. This means that we identify lattices which can be obtained from one another by rotation. In our notation, the lattice theta functions studied by Montgomery in \cite{Montgomery_Theta_1988} are given by
\begin{equation}\label{eq_theta}
	\theta_\L(\alpha) = \sum_{k,l \in \Z} e^{- \pi \tfrac{\alpha}{y} (k^2 + 2x k l +(x^2+y^2)l^2)}.
\end{equation}
If we set
\begin{equation}
	\tau = i \, \alpha \, S^T S, \qquad \alpha > 0,
\end{equation}
then $\tau$ is an element in the Siegel upper half-space of genus 2, i.e., $\tau \in \H^2$, and $\Re(\tau) = 0$. From this point of view, $\theta_q$ defined by \eqref{eq_theta_Montgomery} and $\theta_\L$ defined by \eqref{eq_theta}, which are just different notations for the same function, are restrictions of the Riemann theta function $\Theta^2$;
\begin{equation}
	\theta_q(\alpha) = \theta_\L(\alpha) = \Theta^2 \left(0; \, i \, \alpha \, S^T S \right).
\end{equation}
In analogy to the Jacobi theta functions, we will call $\Theta^2 (0; \tau)$ a Riemann theta null or Riemann theta constant. Also, we note that by the above arguments
\begin{equation}
	\H^2_\Im := \{ \tau \in \H^2 \mid \Re(\tau) = 0 \} \cong \H.
\end{equation}
Thus, we can parametrize a 2-dimensional lattice by $\tau \in \H$, instead of $\tau \in \H^2$, $\Re(\tau) = 0$. The main result in \cite{Montgomery_Theta_1988} has been stated as follows;
\begin{theorem}[Montgomery]
	For any $\alpha > 0$ and any quadratic form $q$ of discriminant $-1$, we have
	\begin{equation}
		\theta_h(\alpha) \leq \theta_q (\alpha),
	\end{equation}
	where $h(k,l) = \frac{2}{\sqrt{3}} \left( k^2 + k l + l^2 \right)$. Equality holds if and only if $q(k,l)$ is integrally equivalent to $h(k,l)$.
\end{theorem}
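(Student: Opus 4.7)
The plan is to exploit the $SL_2(\Z)$-invariance of $\theta_\L$, reduce the minimization to a fundamental domain, and then use Poisson summation together with analytic properties of Jacobi theta functions to isolate the hexagonal minimum.

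First, I would observe that $\theta_\L(\alpha)$ depends on $S$ only through the Gram matrix $S^T S$, so it is unchanged under $S \mapsto S\gamma$ for $\gamma \in SL(2,\Z)$. In the parametrization $\tau = x + iy \in \H$ coming from \eqref{eq_generating_matrix}, this is the standard modular action on the upper half plane, so it suffices to analyze $(x,y) \mapsto \theta_\L(\alpha)$ on the fundamental domain $\mathcal{D} = \{x+iy : |x| \leq 1/2,\; x^2+y^2 \geq 1\}$. The hexagonal form corresponds to the corner $\tau_h = \tfrac12 + i\tfrac{\sqrt{3}}{2}$. Completing the square in $k$ and applying Poisson summation to the inner sum gives the mixed representation
\[
\theta_\L(\alpha) = \sqrt{y/\alpha}\sum_{k\in \Z} e^{-\pi y k^2/\alpha}\, \vartheta_3(kx;\, i\alpha y),
\]
where $\vartheta_3(z;\sigma) = \sum_l e^{\pi i \sigma l^2 + 2\pi i l z}$. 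This representation is manifestly even and $1$-periodic in $x$, and it cleanly separates the roles of the two coordinates.

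Second, I would show that for each fixed $y$, the map $x \mapsto \theta_\L(\alpha)$ attains its minimum on $[0, 1/2]$ at $x = 1/2$. Term by term, $\vartheta_3(kx; i\alpha y)$ interpolates between $\vartheta_3(0; i\alpha y)$ at $x = 0$ and $\vartheta_4(0; i\alpha y) < \vartheta_3(0; i\alpha y)$ at $x = 1/2$ when $k$ is odd, while for even $k$ the endpoint values coincide. Because the intermediate behavior is not monotone term by term, proving the global monotonicity on $[0,1/2]$ is the heart of the argument and the step I expect to be the main obstacle. One needs the $x$-derivative of the full sum to be non-positive throughout $[0,1/2]$, which amounts to a non-trivial positivity inequality for a double trigonometric series with Gaussian weights. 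A natural strategy is to rewrite $\partial_x \theta_\L$ via a second application of Poisson summation and regroup the resulting terms so that the sign becomes manifest, or, equivalently, to establish a direct inequality comparing the quadratic forms $q_{(x,y)}$ and $q_{(1/2,y)}$ on a carefully chosen partition of $\Z^2$ that pairs each lattice point with a reflected counterpart.

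Third, once the monotonicity in $x$ is established, the minimum of $\theta_\L(\alpha)$ over $\mathcal{D}$ must lie on the segment $\{x = 1/2,\; y \geq \sqrt{3}/2\}$, and the remaining task is to show that $y \mapsto \theta_\L(\alpha)|_{x=1/2}$ is non-decreasing on $[\sqrt{3}/2, \infty)$. At $x = 1/2$ the mixed representation collapses, via $e^{\pi i k l} = (-1)^{kl}$, to an alternating double sum in which each slice has a controlled sign; differentiating in $y$ and separating dominant from subdominant contributions should then yield the required strict monotonicity. Combined with the strict inequality obtained in the second step, this pins the global minimum at $\tau_h$ and shows that equality forces $\L$ to be integrally equivalent to the hexagonal lattice, giving both the bound and the uniqueness statement.
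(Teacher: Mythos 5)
Your overall strategy is exactly the one Montgomery follows (and the one this paper mirrors in Sections 3--5 for $\theta^c_\L$ and $\theta^\pm_\L$): reduce to the fundamental domain by modular invariance, pass to the mixed representation $\theta_\L(\alpha)=\sqrt{y/\alpha}\sum_k e^{-\pi yk^2/\alpha}\,\widehat{\vartheta}(kx;\alpha y)$ via Poisson summation on one index, prove that $\theta_\L$ is decreasing in $x$ on $(0,\tfrac12)$, and then handle the boundary segment $x=\tfrac12$, $y\ge\tfrac{\sqrt3}{2}$ separately. However, the proposal stops precisely where the proof begins: both monotonicity statements are asserted as "should yield" rather than proved, and these two lemmas \emph{are} the content of Montgomery's theorem. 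The missing ingredient for the $x$-direction is the auxiliary function $Q(\beta;t)=-\partial_\beta\widehat{\vartheta}(\beta;t)/\sin(2\pi\beta)$ together with its two key properties: it is positive and strictly decreasing in $\beta$ on $(0,\tfrac12)$ (proved from the Jacobi triple product; Lemma \ref{lem_aux1_Mont} here), and it admits the explicit two-sided bounds $A(t)\le Q(\beta;t)\le B(t)$ of Lemma \ref{lem_aux2_Mont}. Writing $\partial_x\theta_\L=-\sqrt{y/\alpha}\sum_k e^{-\pi yk^2/\alpha}\,k\sin(2\pi kx)\,Q(kx;\alpha y)$, one then isolates the $k=\pm1$ terms (which have the favorable sign since $\sin(2\pi x)>0$), bounds the tail using $|\sin(2\pi kx)|\le k\sin(2\pi x)$ and $Q\le B$, and checks numerically that $A/B$ dominates the resulting tail sum in each of the regimes $t<1$ and $t\ge1$. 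Without this quantitative comparison the sign of $\partial_x\theta_\L$ is not established.

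One of your two fallback suggestions is actually a dead end and worth flagging: a direct termwise comparison of $q_{(x,y)}$ with $q_{(1/2,y)}$ on a partition of $\Z^2$ cannot work, because in the slice decomposition $\theta_\L(\alpha)=\sum_l e^{-\pi\alpha yl^2}\vartheta(xl;\tfrac{\alpha}{y})$ the even-$l$ slices are \emph{maximized}, not minimized, at $x=\tfrac12$ (there $\vartheta(xl;\cdot)=\vartheta(0;\cdot)$, the largest possible value, whereas for generic $x$ it is strictly smaller). So the inequality genuinely requires the cancellation captured by the $Q$-function argument; it is not a sum of termwise inequalities. Finally, your third step also needs a concrete mechanism: along $x=\tfrac12$ Montgomery again runs a $Q$-type monotonicity argument in $y$ (the analogue for $\theta_3$ of what this paper does via the factorization \eqref{eq_thetacy1/2} and Proposition \ref{pro_fauste} for $\theta_2$), and "separating dominant from subdominant contributions" must again be backed by explicit estimates of the same kind. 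As written, the proposal is a correct road map with the two essential analytic lemmas left unproven.
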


The quadratic form $h(k,l)$ is derived from the generating matrix of the hexagonal lattice $\mathsf{A}_2$. As announced, we note that the matrix defining a lattice is never unique, because any $2 \times 2$ matrix of determinant 1 with integer entries, which might be called a modular matrix, defines the integer lattice $\Z^2$;
\begin{equation}
	\mathcal{B} \in SL(2, \Z)
	\qquad \Longleftrightarrow \qquad
	\mathcal{B} \Z^2 = \Z^2.
\end{equation}
Furthermore, for any $\widetilde{\tau} \in \H$ there exists a (modular) matrix $\mathcal{B} \in SL(2,\Z)$, such that $\widetilde{\tau} = \mathcal{B} \circ \tau$, with $\tau \in D$. Here, $\mathcal{B}$ acts on an element $\tau \in \H$ by a linear fractional transformation, i.e., $\mathcal{B} \circ \tau = \frac{a \tau + b}{c \tau + d}$, $a,b,c,d \in \Z$, $ad-bc = 1$ and
\begin{equation}\label{eq_D}
	D = \{ \tau \in \H \mid |\tau| \geq 1, \ |\Re(\tau)| \leq \tfrac{1}{2} \}
\end{equation}
is the fundamental domain of $\H$. For details we refer to the textbook of Serre \cite{Serre} and Section \ref{sec:parameters}. Also, we remark that for our purposes (due to symmetry reasons), it suffices to work in the right half of the fundamental domain $D$, i.e.,
\begin{equation}\label{eq_D+}
	D_+ = \{ \tau \in \H \mid |\tau| \geq 1, \ 0 \leq \Re(\tau) \leq \tfrac{1}{2} \}.
\end{equation}

After these preliminaries, we see that (after a suitable choice of basis and rotation) any lattice $\L$ can be written as
\begin{equation}
	\L = S \Z^2 = S (\mathcal{B} \Z^2) = (S \mathcal{B}) \Z^2,
\end{equation}
with $S = \tfrac{1}{\sqrt{y}}
\begin{pmatrix}
	1 & x\\
	0 & y
\end{pmatrix}
$, $0 \leq x \leq \tfrac{1}{2}$ and $x^2+y^2 \geq 1$, $y \in \R_+$. Under these conditions, we are working with a Minkowski basis of the lattice, which is important for our work.

In general, the quadratic forms derived from the Gram matrices of $S$ and $S \mathcal{B}$ are called integrally equivalent (see also \cite{ConSlo98}). They result from the same lattice, but different bases were used. Hence, Montgomery's main result in \cite{Montgomery_Theta_1988} states that the hexagonal lattice uniquely minimizes $\theta_\L(\alpha)$ (among lattices of unit volume) for any fixed $\alpha > 0$.

\medskip
After this crash course on lattice structures and how to parametrize them, we draw connections to 1-dimensional lattice theta functions. First, we note that Montgomery's 2-dimensional lattice theta function defined in \eqref{eq_theta} is a very natural extension of the 1-dimensional lattice-$\theta_3$ function\footnote{We note that the only 1-dimensional lattice structure we have is (isomorphic to) $\Z$.};
\begin{equation}
	\theta_3(t) = \sum_{k \in \Z} e^{- \pi t k^2}, \qquad t \in \R_+.
\end{equation}
We also note the analogy in the respective functional equations;
\begin{equation}
	\theta_3(t) = \tfrac{1}{\sqrt{t}} \, \theta_3 \left( \tfrac{1}{t} \right)
	\qquad \textnormal{ and } \qquad
	\theta_\L(\alpha) = \tfrac{1}{\alpha} \, \theta_\L \left( \tfrac{1}{\alpha} \right).
\end{equation}
The equation for $\theta_3$ is a special case of the Jacobi identity for Jacobi's $\vartheta_3$-function. The identity for $\theta_\L$ would usually involve the dual lattice $\L^*$ which is given by
\begin{equation}
	\L^* = S^{-T} \Z^2.
\end{equation}
However, by a re-labeling of the indices we derive the result.  Equivalently, we may use the symplectic version of the Poisson summation formula. In this case, the standard symplectic form $\sigma(. \, , .)$
\begin{equation}
	\sigma((x_1,y_1),(x_2,y_2)) = x_1 y_2 - x_2 y_1, \quad (x_1,y_1), (x_2, y_2) \in \R^2,
\end{equation}
which is skew-symmetric, replaces the usual standard Euclidean inner product, denoted by $\cdot$, in the complex exponential. The symplectic Poisson summation formula involves the symplectic Fourier transform, which only exists for functions of $2d$ variables, just as the symplectic form\footnote{The general standard symplectic form on $\R^{2d}$ is given by $\sigma(z_1, z_2) = x_1 \cdot y_2 - x_2 \cdot y_1$, $z_1=(x_1,y_1)$, $z_2=(x_2,y_2)$, $x_1,x_2,y_1,y_2 \in \Rd$.}. For dimension 2 it is given by
\begin{equation}\label{eq_sympfourier}
	\mathcal{F}_\sigma F(\xi,\eta) = \int_{\R^2} F(\xi', \eta') e^{-2 \pi i \sigma\left((\xi,\eta),(\xi',\eta')\right)} \, d\xi' d\eta'.
\end{equation}
This formula holds for Schwartz functions and extends to $\Lt[2]$, just as the usual Fourier transform. The symplectic Poisson summation formula reads \cite{Faulhuber_Note_2018}
\begin{equation}
	\sum_{\l \in \L} F(\l + z) = \sum_{\l^\circ \in \L^\circ} \F_\sigma F(\l^\circ) e^{2 \pi i \sigma(\l^\circ, z)}.
\end{equation}
The lattice $\L^\circ = J \L^*$ is the so-called adjoint lattice, commonly used in time-frequency analysis (see, e.g., \cite{Gro01}). The advantage is that in dimension 2 any lattice is symplectic, i.e., the generating matrix fulfills,
\begin{equation}
	S J S^T = J, \quad
	J = \begin{pmatrix}
		0 & 1\\
		-1 & 0
	\end{pmatrix}.
\end{equation}
For lattices of the form $\L = S \Z^2$, $S$ symplectic, we have $\L^\circ = J S^{-T} (J^{-1} \Z^2) = \L$. Furthermore, the Gaussian functions $e^{-\pi q_\L(\xi,\eta)}$, where $(\xi,\eta) \in \R^2$ and $q_\L$ is the quadratic form defined by \eqref{eq_quadratic_form}, are eigenfunctions of the symplectic Fourier transform with eigenvalue 1. For this fact and an explicit usage of the symplectic Poisson summation formula we refer to \cite{Faulhuber_Note_2018}. For details on symplectic methods in harmonic analysis as well as in mathematical physics we refer to \cite{Fol89} and \cite{Gos11}.

Furthermore, we note that the lattice $\theta_3$-function is a restriction of the Jacobi $\Theta_3$-null, i.e., $\vartheta_3(\tau)$, if $\tau = i t$, $ t \in \R_+$. We can also say that we pick $\tau$ from
\begin{equation}
	\H_\Im := \{ \tau \in \H \mid \Re(\tau) = 0 \}.
\end{equation}

In analogy to the extension of the Jacobi $\Theta_3$ function, which yields the Riemann theta function $\Theta^g$, we could as well define the Riemann $\Theta_2^g$- and $\Theta_4^g$-function in the following way;
\begin{align}
	\Theta_2^g (z; \tau) & = \sum_{k \in \Z^g} e^{\pi i \left(k+\mathbf{\tfrac{1}{2}}\right) \cdot \tau \left(k+\mathbf{\tfrac{1}{2}}\right)} e^{2 \pi i k \cdot z}, & & \mathbf{\tfrac{1}{2}} = (\tfrac{1}{2}, \ldots , \tfrac{1}{2}) \in \R^g,\\
	\Theta_4^g (z; \tau) & = \sum_{k \in \Z^g} (-1)^{k_1 + \ldots + k_g} e^{\pi i k \cdot \tau k} e^{2 \pi i k \cdot z}, & & (z,\tau) \in \C^g \times \H^g.
\end{align}
Hence, $\theta^c_\L$ and $\theta^\pm_\L$ as defined in \eqref{eq_theta_c} and \eqref{eq_theta_+-} are possible extensions of the 1-dimensional lattice $\theta_2$ and $\theta_4$-functions,
\begin{equation}
	\theta_2(t) = \sum_{k \in \Z} e^{-\pi t \left(k +  \tfrac{1}{2} \right)^2},
	\quad \text{ and} \quad
	\theta_4(t) = \sum_{k \in \Z} (-1)^k e^{- \pi t k^2},
\end{equation}
to 2-dimensional lattices. We note that, as a result of the Poisson summation formula (or the Jacobi identity), we have the functional equation
\begin{equation}\label{eq_fcteqtheta24}
	\theta_2(t) = \tfrac{1}{\sqrt{t}} \, \theta_4 \left( \tfrac{1}{\sqrt{t}} \right).
\end{equation}
As a result of the symplectic Poisson summation formula, we have the Jacobi-like identity
\begin{equation}
	\theta^c_\L(\alpha) = \tfrac{1}{\alpha} \, \theta^\pm_\L \left(\tfrac{1}{\alpha} \right).
\end{equation}

Before starting to prove the results, we introduce some more notation and state the product representation for the Jacobi $\Theta_3$-function $(z,\tau) \in \C \times \H$, as we will use it later on.
\begin{align}
		\Theta_3(z; \tau) & = \sum_{k \in \Z} e^{\pi i k^2 \tau} e^{2 \pi i k z}\\
		 & = \prod_{k \geq 1} \left( 1 - e^{2k \pi i \tau} \right) \left( 1 + e^{(2k-1) \pi i \tau} e^{2 \pi i z} \right) \left( 1 + e^{(2k-1) \pi i \tau} e^{-2 \pi i z}\right).
\end{align}
Details on the equality of the series and infinite product representation are given, e.g., in the textbook of Stein and Shakarchi \cite{SteSha_Complex_03}. For purely imaginary $\tau$ and real $z$ the above function is real-valued and (up to re-scaling) the fundamental solution to the heat equation on the circle line. Consider this restricted, real-valued 1-dimensional lattice theta function
\begin{equation}\label{eq_hk_Fourier}
	\widehat{\vartheta}(\beta; t) = \sum_{k \in \Z} e^{- \pi t k^2} e^{ 2 \pi i k \beta}, \quad (\beta,t) \in \R \times \R_+ \, .
\end{equation}
The usage of the notion $\widehat{\vartheta}$ refers to the fact that we are dealing with a Fourier series. We define the related 1-dimensional lattice theta function
\begin{equation}\label{eq_hk_shift}
	\vartheta(\beta; t) = \sum_{k \in \Z} e^{- \pi t (k+\beta)^2}.
\end{equation}
By the Poisson summation formula, we have the identity
\begin{equation}
	\widehat{\vartheta}(\beta; t) = \tfrac{1}{\sqrt{t}} \, \vartheta \left(\beta; \tfrac{1}{t} \right) .
\end{equation}
Furthermore, note that $\F (e^{-\pi (x+\beta)^2})(\omega) = e^{-\pi \omega^2} e^{2 \pi i \omega \beta}$, where $\F$ denotes the Fourier transform
\begin{equation}
	\F f(\omega) = \int_{\R} f(x) e^{-2 \pi i x \omega} \, dx,
\end{equation}
which is another reason why we chose the $\widehat{\vartheta}$ -notation.

It is not hard to see that
\begin{equation}
	\theta_2(t) = \vartheta \left( \tfrac{1}{2},t \right)
	\qquad \textnormal{ and } \qquad
	\theta_4(t) = \widehat{\vartheta} \left( \tfrac{1}{2},t \right).
\end{equation}
Also,
\begin{equation}
	\theta_3(t) = \widehat{\vartheta}(0;t) = \vartheta(0;t).
\end{equation}

In analogy to $\vartheta$ and $\widehat{\vartheta}$, we define the 2-dimensional lattice theta functions\footnote{We could have stressed the fact that $\vartheta$ and $\widehat{\vartheta}$ are 1-dimensional lattice theta functions by writing the lattice $\Z$ in the index, i.e., writing $\vartheta_\Z$ and $\widehat{\vartheta}_\Z$.}
\begin{equation}
	\Theta_\L(\xi,\eta;\alpha) = \sum_{k,l \in \Z} e^{- \tfrac{\pi \alpha}{y} \left( (k+\xi)^2 + 2 x (k+\xi)(l+\eta) + (x^2+y^2) (l+\eta)^2 \right)}
\end{equation}
and
\begin{equation}\label{eq_hat_Theta_L}
	\widehat{\Theta}_\L(\xi,\eta;\alpha) = \sum_{k,l \in \Z} e^{-\tfrac{\pi \alpha}{y} (k^2 + 2 x k l + (x^2+y^2) l^2)} e^{2 \pi i (k \eta - l \xi)}.
\end{equation}

Note that we used the symplectic form $\sigma$ explicitly in the definition of \eqref{eq_hat_Theta_L}. We have the following functional equation, due to the symplectic Poisson summation formula;
\begin{equation}\label{eq_functional}
	\Theta_\L(\xi, \eta; \alpha) = \tfrac{1}{a} \, \widehat{\Theta}_\L(\xi,\eta; \tfrac{1}{\alpha}).
\end{equation}
The function $\theta_\L$, given by \eqref{eq_theta} and considered by Montgomery, is obtained as a special case of the above functions;
\begin{equation}
	\theta_\L(\alpha) = \Theta_\L(0,0;\alpha) = \widehat{\Theta}_\L(0,0;\alpha).
\end{equation}
Furthermore, we also obtain the $\theta_\L^c$ and the $\theta_\L^\pm$ function as special cases;
\begin{equation}
	\theta_\L^c(\alpha) = \Theta_\L \left( \tfrac{1}{2}, \tfrac{1}{2}; \alpha \right)
	\qquad \textnormal{ and } \qquad
	\theta_\L^\pm = \widehat{\Theta}_\L \left( \tfrac{1}{2}, \tfrac{1}{2}; \alpha \right).
\end{equation}

\section{Auxiliary Technical Results}

\subsection{Montgomery's Results}\label{sec_Mont}
In this section we present some of the results of Montgomery's work \cite{Montgomery_Theta_1988}, which are integral for this article.

We note that, for any parameter $t > 0$, the above 1-dimensional lattice theta functions $\widehat{\vartheta}$ and $\vartheta$, defined by \eqref{eq_hk_Fourier} and \eqref{eq_hk_shift} respectively, are periodic in $\beta$ with period 1 and also are even functions of $\beta$. Furthermore, from the product representation, we conclude that the maximum is achieved for $\beta \in \Z$ and the minimum for $\beta \in \Z + \tfrac{1}{2}$;
\begin{align}
	\widehat{\vartheta}(\beta;t) & = \prod_{k \geq 1} \left( 1 - e^{- 2 k \pi t} \right) \left( 1 + e^{-(2k -1) \pi t} e^{2 \pi i \beta} \right)\left( 1 + e^{-(2k -1) \pi t} e^{-2 \pi i \beta} \right)\\
	& =	\prod_{k \geq 1} \left( 1 - e^{- 2 k \pi t} \right) \left( 1 + 2 e^{- (2k-1) \pi t} \cos(2 \pi \beta) + e^{- (4k-2) \pi t} \right).
\end{align}
The same behavior is shown by the following auxiliary function given in \cite{Montgomery_Theta_1988};
\begin{equation}
	Q(\beta; t) = - \frac{\tfrac{\partial}{\partial \beta} \, \widehat{\vartheta}(\beta; t)}{\sin(2 \pi \beta)}.
\end{equation}

\begin{lemma}[Auxiliary Lemma 1, Montgomery \cite{Montgomery_Theta_1988}]\label{lem_aux1_Mont}
	Let $Q(\beta; t)$ be as above and $t > 0$ fixed. Then $Q(\beta; t)$ is an even function of $\beta$ with period 1 and all values are positive. Furthermore, $Q(\beta; t)$ is a strictly decreasing function of $\beta$ on the interval $\left( 0,\tfrac{1}{2} \right)$.
\end{lemma}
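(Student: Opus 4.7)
My plan rests on the observation that $\widehat{\vartheta}(\beta;t)$ depends on $\beta$ only through $u:=\cos(2\pi\beta)$, which is manifest from the product representation just recalled. Setting $q_k := e^{-(2k-1)\pi t}$ and
\begin{equation}
\Phi(u;t) := \prod_{k\geq 1}\bigl(1+e^{-2k\pi t}\bigr)\prod_{k\geq 1}\bigl(1+2q_k u + q_k^2\bigr),
\end{equation}
one has $\widehat{\vartheta}(\beta;t) = \Phi(\cos(2\pi\beta);t)$, and the chain rule gives $\partial_\beta \widehat{\vartheta}(\beta;t) = -2\pi\sin(2\pi\beta)\,\Phi_u(\cos(2\pi\beta);t)$, whence
\begin{equation}
Q(\beta;t) = 2\pi\,\Phi_u(\cos(2\pi\beta);t).
\end{equation}
From this representation, the $1$-periodicity and evenness in $\beta$ of $Q$ follow at once from the corresponding properties of $\cos(2\pi\beta)$, and the apparent singularities of $Q$ at the zeros of $\sin(2\pi\beta)$ disappear.

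Next, I would establish positivity by differentiating the product term by term to obtain
\begin{equation}
\Phi_u(u;t) = \Phi(u;t)\sum_{k\geq 1}\frac{2q_k}{1+2q_k u + q_k^2}.
\end{equation}
For $u\in[-1,1]$ and $q_k\in(0,1)$, each factor $1+2q_k u + q_k^2 = (1+q_k)^2 - 2q_k(1-u)\geq (1-q_k)^2>0$ is strictly positive, so $\Phi(u;t)>0$ as an absolutely convergent product of positive terms, and every summand in $\Phi_u/\Phi$ is strictly positive. This yields $Q(\beta;t)>0$.

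For the strict monotonicity on $(0,1/2)$, I would exploit the fact that $\beta\mapsto \cos(2\pi\beta)$ is a strictly decreasing bijection from $(0,1/2)$ onto $(-1,1)$, so it suffices to prove $\Phi_{uu}(u;t)>0$ on $(-1,1)$. The plan is to use
\begin{equation}
\Phi_{uu} = \Phi\bigl[((\log\Phi)_u)^2 + (\log\Phi)_{uu}\bigr],
\end{equation}
compute $(\log\Phi)_u = \sum_{k\geq 1}a_k$ and $(\log\Phi)_{uu} = -\sum_{k\geq 1}a_k^2$ with $a_k := 2q_k/(1+2q_k u+q_k^2)>0$, and deduce that $\Phi_{uu}>0$ is equivalent to the strict inequality $\bigl(\sum_k a_k\bigr)^2 > \sum_k a_k^2$, which holds by simply expanding the square since the cross-terms $2a_j a_k$ ($j<k$) are all strictly positive and infinitely many are nonzero. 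The main technical nuisance I anticipate is justifying termwise differentiation of the infinite product twice; this is routine on compact subsets of $u\in[-1,1]$ because $\sum_k q_k$ converges geometrically and uniformly in $u$, and continuity of $\Phi_u,\Phi_{uu}$ then extends both inequalities up to the closed interval, covering the definition of $Q(\beta;t)$ at $\beta\in\tfrac12\Z$.
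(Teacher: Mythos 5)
Your argument is correct, and it is essentially the approach the paper relies on: the paper itself does not reprove this lemma but defers to Montgomery, whose proof (and the paper's own proof of the analogous Lemma for $Q_2$) likewise rests on the product representation and the substitution $u=\cos(2\pi\beta)$. The only real difference is in the last step: Montgomery and the paper differentiate the product termwise and observe that $Q$ is a sum of manifestly positive terms, each strictly decreasing in $\beta$ on $(0,\tfrac12)$ because every factor $1+2q_k\cos(2\pi\beta)+q_k^2$ is; you instead pass to the second derivative and prove convexity of $\Phi$ in $u$ via $\Phi_{uu}=\Phi\bigl[(\sum_k a_k)^2-\sum_k a_k^2\bigr]=2\Phi\sum_{j<k}a_ja_k>0$. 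Both routes are sound; yours costs one extra differentiation (and its uniform-convergence justification, which you correctly note is routine since $\sum_k q_k$ converges geometrically and $1+2q_ku+q_k^2\geq(1-q_k)^2>0$ on $u\in[-1,1]$), while the paper's one-derivative version is slightly more economical and is the form actually reused for $Q_2$ in Lemma 2.4.
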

The proof is given in \cite{Montgomery_Theta_1988} and makes use of the product representation of $\vartheta(\beta; t)$. By using l'H\^o{}pital's rule, we see that $Q(\beta;t)$ is a well-defined function on $\R \times \R_+$. The next auxiliary lemma bounds the function $Q(\beta; t)$ from above and from below.
\begin{lemma}[Auxiliary Lemma 2, Montgomery \cite{Montgomery_Theta_1988}]\label{lem_aux2_Mont}
	Let $Q(\beta; t)$ be as above. We define the functions
	\begin{equation}
		A(t) =
		\begin{cases}
			t^{-3/2} e^{-\tfrac{\pi}{4 t}}, & \, 0 \leq t < 1\\
			\left(1 - \tfrac{1}{3000} \right) 4 \pi \, e^{-\pi t}, & \, 1 \leq t
		\end{cases} \; ,
	\end{equation}
	and
	\begin{equation}
		B(t) =
		\begin{cases}
			t^{-3/2}, & \, 0 \leq t < 1\\
			\left(1 + \tfrac{1}{3000} \right) 4 \pi \, e^{-\pi t}, & \, 1 \leq t
		\end{cases} \; .
	\end{equation}
	Then, for any $\beta \in \R$ and any $t\geq 0$, we have
	\begin{equation}
		A(t) \leq Q(\beta; t) \leq B(t).
	\end{equation}
\end{lemma}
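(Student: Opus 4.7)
The plan is to exploit the monotonicity of $Q(\cdot;t)$ furnished by Lemma~\ref{lem_aux1_Mont}, which immediately reduces the bi-parametric estimate to two scalar inequalities, and then to treat the regimes $t\ge 1$ and $0<t<1$ separately using the original Fourier series and its Poisson-dual, respectively. Since $Q(\cdot;t)$ is even, $1$-periodic, and strictly decreasing on $(0,\tfrac12)$, one has $Q(\tfrac12;t)\le Q(\beta;t)\le Q(0;t)$ for every $\beta$, so it suffices to verify $Q(0;t)\le B(t)$ and $Q(\tfrac12;t)\ge A(t)$. First I would derive closed formulas for these two extremal values. L'H\^opital's rule applied to the definition of $Q$, together with termwise differentiation of \eqref{eq_hk_Fourier}, yields
\begin{equation*}
  Q(0;t)=4\pi\sum_{k\ge1}k^2e^{-\pi tk^2},\qquad Q(\tfrac12;t)=4\pi\sum_{k\ge1}(-1)^{k+1}k^2e^{-\pi tk^2},
\end{equation*}
while using $\widehat{\vartheta}(\beta;t)=t^{-1/2}\vartheta(\beta;1/t)$ and differentiating twice in $\beta$ gives the Poisson-dual representations
\begin{equation*}
  Q(0;t)=\frac{1}{t^{3/2}}+\frac{2}{t^{3/2}}\sum_{k\ge1}\Bigl(1-\frac{2\pi k^2}{t}\Bigr)e^{-\pi k^2/t},
\end{equation*}
\begin{equation*}
  Q(\tfrac12;t)=\frac{2}{t^{3/2}}\sum_{k\ge0}\Bigl(\frac{2\pi(k+\tfrac12)^2}{t}-1\Bigr)e^{-\pi(k+\tfrac12)^2/t}.
\end{equation*}

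For $t\ge 1$ I would work with the Fourier representations. Isolating the $k=1$ contribution writes $Q(0;t)=4\pi e^{-\pi t}\bigl(1+R(t)\bigr)$ with $R(t)=\sum_{k\ge2}k^2e^{-\pi t(k^2-1)}$, and the target $Q(0;t)\le B(t)$ becomes $R(t)\le\tfrac{1}{3000}$. Each term of $R$ is strictly decreasing in $t$, so this reduces to $R(1)\le\tfrac{1}{3000}$, which follows because $R(1)=4e^{-3\pi}+9e^{-8\pi}+\cdots$ is dominated by its first term $4e^{-3\pi}\approx 3.25\cdot 10^{-4}$ with a geometric-type remainder of ratio at most $e^{-5\pi}$. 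For the lower bound, the sequence $k^2e^{-\pi tk^2}$ is strictly decreasing in $k$ when $t\ge 1$, so Leibniz' alternating-series inequality applied to $Q(\tfrac12;t)$ gives $Q(\tfrac12;t)\ge 4\pi e^{-\pi t}(1-4e^{-3\pi t})$, and the same $t=1$ check yields $1-4e^{-3\pi t}\ge 1-\tfrac{1}{3000}$.

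For $0<t<1$ I would switch to the Poisson-dual representations, where the series converge rapidly. Because $2\pi k^2/t>2\pi>1$ for every $k\ge1$ and $t<1$, every coefficient $1-2\pi k^2/t$ in the formula for $Q(0;t)$ is strictly negative, and therefore $Q(0;t)<t^{-3/2}=B(t)$ with no further estimation. Under the same condition every summand in the dual formula for $Q(\tfrac12;t)$ is positive, so retaining only the $k=0$ term and discarding the rest yields
\begin{equation*}
  Q(\tfrac12;t)\ge\frac{\pi-2t}{t^{5/2}}\,e^{-\pi/(4t)}\ge\frac{1}{t^{3/2}}\,e^{-\pi/(4t)}=A(t),
\end{equation*}
the second inequality being equivalent to $\pi\ge 3t$, which holds on $(0,1)$. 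The main obstacle is the sharpness of the constant $\tfrac{1}{3000}$ in the regime $t\ge 1$: at $t=1$ the actual tail $R(1)$ and the allowed tolerance agree to about one significant figure, so the Leibniz estimate and the $t$-monotonicity comparison must be carried out cleanly, without the kind of crude Gaussian-tail overestimation that would waste this slim margin.
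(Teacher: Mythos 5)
Your proof is correct and follows exactly the strategy the paper attributes to Montgomery (and itself carries out in detail for the analogous Lemma \ref{lem_aux2}): reduce to the extremal values $Q(0;t)$ and $Q(\tfrac12;t)$ via the monotonicity of Lemma \ref{lem_aux1_Mont}, use the Fourier series with a first-term-dominates tail estimate for $t\geq 1$, and switch to the Poisson-dual representation for $t<1$. The numerical checks ($4e^{-3\pi}\approx 3.23\cdot 10^{-4}<\tfrac{1}{3000}$ and $\pi\geq 3t$ on $(0,1)$) all hold, so nothing further is needed.
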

The proof is given in \cite{Montgomery_Theta_1988} and makes use of the rapid convergence of the series and the fact that the first term contributes the main amount to the series. Furthermore, the fact that $Q(0; t)$ yields the maximal value and $Q \left( \tfrac{1}{2}; t \right)$ yields the minimal value allows for the estimates $B(t)$ and $A(t)$ respectively. We will use similar arguments in the sequel.

\subsection{Results for the proof of the first main lemma}
We will now state and prove additional auxiliary results, which we will need in order to prove our first main lemma in Section \ref{sec_Main1}.

Consider the classical Jacobi theta-2 function of the two complex variable $\tau \in \H$ and $z \in \C$;
\begin{align}\label{eq_theta2}
	\theta_2(z;\tau) & = \sum_{k \in \Z} e^{\pi i \tau \left(k +\tfrac{1}{2} \right)^2} e^{2 \pi i \left( k + \tfrac{1}{2} \right) z}\\
	& = 2 \, e^{\pi i \tfrac{\tau}{4}} \, \cos(\pi z) \prod_{k \geq 1} \left(1 - e^{2 \pi i \tau k}\right) \left( 1 + 2 \cos(2 \pi z) e^{2 \pi i \tau k} + e^{4 \pi i \tau k}\right).
\end{align}
Restricting the variables to $\tau = i t$, $t \in \R_+$ and $z = \beta \in \R$, we denote the resulting function by
\begin{equation}
	\widehat{\vartheta}_2(\beta; t) = \theta_2(\beta; i t) = \sum_{k \in \Z} e^{-\pi t \left(k +\tfrac{1}{2} \right)^2} e^{2 \pi i \left( k + \tfrac{1}{2} \right) \beta} = \sum_{k \in \Z} e^{-\pi t \left(k +\tfrac{1}{2} \right)^2} \cos(2\pi \left(k + \tfrac{1}{2} \right) \beta)
\end{equation}
Obviously, this function is periodic in $\beta$ with period 2.

By using the Poisson summation formula, we can define the function
\begin{equation}\label{eq_theta2+-}
	\vartheta_2(\beta;t) = \sum_{k \in \Z} (-1)^k e^{- \pi t (k+\beta)^2},
\end{equation}
analogous to $\vartheta$ from Section \ref{sec_Mont}. These functions fulfill the functional equation
\begin{equation}\label{eq_theta2_functional}
	\vartheta_2(\beta; t) = \tfrac{1}{\sqrt{t}} \, \widehat{\vartheta}_2 \left(\beta; \tfrac{1}{t} \right),
\end{equation}
The notation $\widehat{\vartheta}_2$ and $\widehat{\vartheta}$ we use here, stresses the fact that we deal with two sides of the same medal. Turning the medal amounts to using the Poisson summation formula. Depending on the specific situation, one side will be more useful to work with than the other. The functions $\vartheta_2$ and $\widehat{\vartheta}_2$ are derived from a periodization of Gaussian functions, i.e., the heat kernel on $\R$, with alternating sign and shifted heat kernel on the torus, respectively. The connection between the heat kernel on $\R$, on the torus and theta functions is also explained in \cite{Jorgenson_Heat_2001} (even more generally for manifolds and quotient spaces).

We note that the function $\widehat{\vartheta}$ considered in Section \ref{sec_Mont} and by Montgomery \cite{Montgomery_Theta_1988} is actually the classical Jacobi theta-3 function with restricted arguments.

We define the following auxiliary function;
\begin{equation}\label{eq:defQ2}
	Q_2 (\beta; t) := - \frac{\partial_\beta \widehat{\vartheta}_2(\beta; t)}{\sin(\pi \beta)}, \qquad \beta \in \R, \, t \in \R_+.
\end{equation}
\begin{lemma}\label{lem_aux1}
	The function $Q_2(\beta; t)$ is an even function of $\beta$ with period 1 and all values are positive. Furthermore, $Q_2(\beta;t)$ is a strictly decreasing function of $\beta$ on the interval $\left(0, \tfrac{1}{2} \right)$.
\end{lemma}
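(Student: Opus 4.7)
My plan is to mirror the strategy Montgomery uses for his theta-3 auxiliary function $Q(\beta;t)$ in Lemma~\ref{lem_aux1_Mont}, with one extra ingredient needed to handle the half-integer shift present in $\widehat{\vartheta}_2$. First I handle \emph{period and evenness}: from the cosine-series expression
\begin{equation*}
\widehat{\vartheta}_2(\beta;t)=\sum_{k\in\Z}e^{-\pi t(k+1/2)^2}\cos((2k+1)\pi\beta)
\end{equation*}
one sees at once that $\widehat{\vartheta}_2$ is even in $\beta$ and that $\widehat{\vartheta}_2(\beta+1;t)=-\widehat{\vartheta}_2(\beta;t)$, because $\cos((2k+1)\pi(\beta+1))=-\cos((2k+1)\pi\beta)$. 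Hence $\partial_\beta\widehat{\vartheta}_2$ is \emph{odd} in $\beta$ and also anti-periodic with anti-period $1$. Since $\sin(\pi\beta)$ enjoys exactly these two symmetries, the ratio $Q_2$ is an even function of $\beta$ with honest period $1$, and it extends continuously across the apparent singularities at $\beta\in\Z$ by l'H\^opital's rule, just as was observed for Montgomery's $Q$.

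For \emph{positivity} and \emph{monotonicity} I would start from the product expansion \eqref{eq_theta2} specialized to $\tau=it$. Writing $q=e^{-\pi t}\in(0,1)$ we have $\widehat{\vartheta}_2(\beta;t)=2q^{1/4}\cos(\pi\beta)\,P(\beta;t)$ where
\begin{equation*}
P(\beta;t):=\prod_{k\geq1}(1-q^{2k})\bigl(1+2q^{2k}\cos(2\pi\beta)+q^{4k}\bigr).
\end{equation*}
Logarithmic differentiation of this product in $\beta$, combined with the identity $\sin(2\pi\beta)=2\sin(\pi\beta)\cos(\pi\beta)$ that lets me factor out a single copy of $\sin(\pi\beta)$, yields
\begin{equation*}
Q_2(\beta;t)=2\pi q^{1/4}\,P(\beta;t)\left[1+8\cos^2(\pi\beta)\sum_{k\geq1}\frac{q^{2k}}{1+2q^{2k}\cos(2\pi\beta)+q^{4k}}\right].
\end{equation*}
Term-by-term differentiation of the infinite product is justified by its uniform convergence on compact subsets, and this is the only mildly technical point in the computation.

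The algebraic identity that makes the rest transparent is
\begin{equation*}
1+2q^{2k}\cos(2\pi\beta)+q^{4k}=(1-q^{2k})^2+4q^{2k}\cos^2(\pi\beta),
\end{equation*}
so after the substitution $u=\cos^2(\pi\beta)\in[0,1]$ every factor and every summand in the formula above becomes an affine function of $u$ with nonnegative coefficients. Positivity of $Q_2$ is then immediate: each factor satisfies $(1-q^{2k})^2+4q^{2k}u\geq(1-q^{2k})^2>0$, and the bracket is manifestly a sum of nonnegative quantities plus $1$.

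Finally, for strict monotonicity on $(0,\tfrac12)$, I would expand the bracket into the product to obtain
\begin{equation*}
\tfrac{1}{2\pi q^{1/4}}Q_2=\prod_{k\geq1}(1-q^{2k})\bigl[(1-q^{2k})^2+4q^{2k}u\bigr]+8u\sum_{k\geq1}q^{2k}\prod_{j\geq1}(1-q^{2j})\prod_{j\neq k}\bigl[(1-q^{2j})^2+4q^{2j}u\bigr].
\end{equation*}
Every term on the right is a product of strictly positive functions of $u$, each strictly increasing in $u$ (with the explicit factor $u$ in the second sum also strictly increasing). Hence the right-hand side is strictly increasing in $u$, and since $u=\cos^2(\pi\beta)$ is strictly decreasing in $\beta$ on $(0,\tfrac12)$, $Q_2(\beta;t)$ is strictly decreasing there. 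The one place where the argument is slightly more subtle than Montgomery's is this reparametrization $\beta\mapsto u$: it converts what a priori looks like a product of a decreasing factor $\cos^2(\pi\beta)$ and an increasing factor $q^{2k}/(1+2q^{2k}\cos(2\pi\beta)+q^{4k})$ into a manifestly monotone expression, which is the main obstacle and the main idea.
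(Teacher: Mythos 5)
Your proof is correct and follows essentially the same route as the paper's: differentiate the product representation of $\widehat{\vartheta}_2$, cancel one factor of $\sin(\pi\beta)$ via the double-angle identity $\sin(2\pi\beta)=2\sin(\pi\beta)\cos(\pi\beta)$, and read off positivity and monotonicity from the resulting product-plus-sum expression. Your substitution $u=\cos^2(\pi\beta)$ together with the identity $1+2q^{2k}\cos(2\pi\beta)+q^{4k}=(1-q^{2k})^2+4q^{2k}\cos^2(\pi\beta)$ is a tidy way of making explicit that every factor stays positive even where $\cos(2\pi\beta)<0$, a point the paper's proof passes over more tersely.
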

\begin{proof}
	We use the product identity from \eqref{eq_theta2} and differentiate with respect to $\beta$;
	\begin{align}
		\partial_\beta \widehat{\vartheta}_2(\beta;t) & = 2 \, e^{-\pi \tfrac{t}{4}} \, \left( -\pi \sin(\pi \beta) \prod_{k \geq 1} \left(1 - e^{-2 \pi t k}\right) \left( 1 + 2 \cos(2 \pi \beta) e^{-2 \pi t k} + e^{-4 \pi t k}\right) \right.\\
		& -4 \pi \cos(\pi \beta) \sin(2 \pi \beta)  \sum_{l \geq 1} \left(1 - e^{-2 \pi t l}\right) e^{-2 \pi t l} \\
		& \qquad \left. \times \prod_{\substack{k \geq 1\\k \neq l}} \left(1 - e^{-2 \pi t k}\right) \left( 1 + 2 \cos(2 \pi \beta) e^{-2 \pi t k} + e^{-4 \pi t k}\right) \right)
	\end{align}

By using the fact that $\frac{\sin(2\pi \beta)}{\sin(\pi \beta)} = 2 \cos(\pi \beta)$, we see that
\begin{align}
	Q_2(\beta;t) & = 2 \,\pi \, e^{-\pi \tfrac{t}{4}} \, \left(\prod_{k \geq 1} \left(1 - e^{-2 \pi t k}\right) \left( 1 + 2 \cos(2 \pi \beta) e^{-2 \pi t k} + e^{-4 \pi t k}\right) \right.\\
		& + 8 \cos(\pi \beta)^2 \sum_{l \geq 1} \left(1 - e^{-2 \pi tl}\right) e^{-2 \pi t l} \\
		& \qquad \left. \times \prod_{\substack{k \geq 1\\k \neq l}} \left(1 - e^{-2 \pi t k}\right) \left( 1 + 2 \cos(2 \pi \beta) e^{-2 \pi t k} + e^{-4 \pi t k}\right) \right). 
\end{align}
Note, that the division by $\sin(\pi \beta)$ for $\beta \in \Z$ is justified by l'H\^o{}pital's rule.

The above expression is even with respect to $\beta$ and periodic in $\beta$ with period 1. For $t > 0$ and $\beta \in \left[0, \tfrac{1}{2} \right]$ all terms in brackets are positive. Also, as both $\cos(\pi \beta)^2$ and $\cos(2 \pi \beta)$ are strictly decreasing on $\left(0, \tfrac{1}{2} \right)$, the same is true for $Q_2(\beta;t)$ as a function of $\beta$.
\end{proof}

\begin{lemma}\label{lem_aux2}
	Let $Q_2(\beta;t)$ be as in \eqref{eq:defQ2}. We define the functions
	\begin{equation}
		A_2(t) =
		\begin{cases}
			2 \pi \left(1- \tfrac{1}{175} \right) \, t^{-3/2} \, e^{-\tfrac{\pi}{4 t}}, & \, 0 \leq t < 1\\
			2 \pi \left(1- \tfrac{1}{175} \right) e^{-\tfrac{\pi t}{4}}, & \, 1 \leq t
		\end{cases} \; ,
	\end{equation}
	and
	\begin{equation}
		B_2(t) =
		\begin{cases}
			\pi \, t^{-3/2}, & \, 0 \leq t < 1\\
			2 \pi \left(1 + \tfrac{1}{55} \right) e^{-\tfrac{\pi t}{4}}, & \, 1 \leq t
		\end{cases} \; .
	\end{equation}
	Then, for any $\beta \in \R$, we have
	\begin{equation}
		A_2(t) \leq Q_2(\beta;t) \leq B_2(t).
	\end{equation}
\end{lemma}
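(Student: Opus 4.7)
The strategy mirrors the structure of Montgomery's Auxiliary Lemma 2 (\cite{Montgomery_Theta_1988}), adapted to $\widehat{\vartheta}_2$. By Lemma~\ref{lem_aux1}, the function $\beta\mapsto Q_2(\beta;t)$ is $1$-periodic, even, and strictly decreasing on $(0,\tfrac12)$, so its maximum in $\beta$ equals $Q_2(0;t)$ and its minimum equals $Q_2(\tfrac12;t)$. Hence the two-sided bound reduces to showing
\begin{equation}
	Q_2(\tfrac12;t)\geq A_2(t) \qquad\text{and}\qquad Q_2(0;t)\leq B_2(t).
\end{equation}
The case split at $t=1$ in the definitions of $A_2$ and $B_2$ reflects the standard device of working on the series side for which the Gaussians decay fastest, so I would treat $t\geq 1$ and $0<t<1$ separately, passing between the two regimes via the functional equation \eqref{eq_theta2_functional}.

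\textbf{Regime $t\geq 1$.} I would start from the explicit product-plus-sum formula for $Q_2(\beta;t)$ derived in the proof of Lemma~\ref{lem_aux1}. At $\beta=\tfrac12$ the factor $\cos(\pi\beta)^2$ annihilates the single-sum term, and $1+2\cos(2\pi\beta)e^{-2\pi tk}+e^{-4\pi tk}=(1-e^{-2\pi tk})^2$, yielding the clean identity
\begin{equation}
	Q_2(\tfrac12;t)=2\pi\, e^{-\pi t/4}\prod_{k\geq 1}(1-e^{-2\pi tk})^3.
\end{equation}
The lower bound then follows from $\prod(1-x_k)\geq 1-\sum x_k$ applied to $x_k=e^{-2\pi tk}$, using $\sum_{k\geq1}e^{-2\pi tk}\leq e^{-2\pi}/(1-e^{-2\pi})$ to absorb everything into the constant $1-\tfrac{1}{175}$. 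At $\beta=0$ the bracket becomes $\prod(1-e^{-4\pi tk})\prod(1+e^{-2\pi tk})+8\sum_l(1-e^{-2\pi tl})e^{-2\pi tl}\prod_{k\neq l}(\cdots)$; I would upper bound the first factor using $\prod(1+x_k)\leq\exp(\sum x_k)$ and the tail sum by a geometric series, with the worst case occurring at $t=1$, verifying that the total stays below $1+\tfrac{1}{55}$.

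\textbf{Regime $0<t<1$.} Here I would invoke the functional equation \eqref{eq_theta2_functional} to rewrite
\begin{equation}
	Q_2(\beta;t)=\frac{2\pi\, t^{-3/2}}{\sin(\pi\beta)}\sum_{k\in\Z}(-1)^k(k+\beta)\,e^{-\pi(k+\beta)^2/t},
\end{equation}
which is rapidly convergent for small $t$. At $\beta=\tfrac12$, pairing $k$ with $-k-1$ collapses the series to $4\pi t^{-3/2}\sum_{k\geq0}(-1)^k(k+\tfrac12)e^{-\pi(k+1/2)^2/t}$, a Leibniz-type alternating series whose terms decrease in absolute value; the lower bound follows from the alternating series estimate, with the error controlled by the ratio $3\,e^{-2\pi/t}\leq 3e^{-2\pi}<\tfrac{1}{175}$. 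At $\beta=0$ the numerator vanishes, so I would apply l'H\^opital's rule to obtain $Q_2(0;t)=2t^{-3/2}\sum_{k\in\Z}(-1)^k(1-2\pi k^2/t)e^{-\pi k^2/t}$, and bound the tail (again worst at $t=1$) to keep the whole expression under $\pi\, t^{-3/2}$.

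\textbf{Main obstacle.} The conceptual structure is routine once one has Lemma~\ref{lem_aux1} and the Poisson-dual representation; the genuinely delicate part is to pin down the explicit constants $1-\tfrac{1}{175}$ and $1+\tfrac{1}{55}$, since the estimates are tightest precisely at the matching point $t=1$, where the product-side and the Poisson-side representations offer comparable numerical margins. Consequently I would devote special care to the boundary $t=1$: check continuity of $A_2$ there, confirm that the (harmless) jump of $B_2$ still leaves $Q_2(0;1)$ below both branches, and only afterwards confirm that the single dominant correction (the $k=1$ term on each side) yields the claimed constants while all remaining terms are geometrically negligible.
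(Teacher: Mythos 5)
Your proposal is correct and its skeleton coincides with the paper's: reduce to the endpoints $\beta=0$ and $\beta=\tfrac12$ via Lemma \ref{lem_aux1}, split at $t=1$, and pass to the regime $0<t<1$ through the functional equation \eqref{eq_theta2_functional}; your treatment of the small-$t$ regime (the Poisson-dual series, pairing $k$ with $-k-1$ at $\beta=\tfrac12$, l'H\^opital at $\beta=0$) is essentially identical to the paper's. Where you genuinely diverge is the regime $t\geq 1$: the paper never returns to the product formula there, but instead differentiates the \emph{series} for $\widehat{\vartheta}_2$ directly, obtaining $Q_2(\tfrac12;t)=2\pi\sum_k(-1)^k(k+\tfrac12)e^{-\pi t(k+\frac12)^2}$ and $Q_2(0;t)=4\pi\sum_k(k+\tfrac12)^2e^{-\pi t(k+\frac12)^2}$, and then bounds the tails $\sum_{k\geq1}(2k+1)e^{-\pi(k^2+k)}<\tfrac{1}{175}$ and $\sum_{k\geq1}(2k+1)^2e^{-\pi(k^2+k)}<\tfrac{1}{55}$ numerically. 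Your product-side route does work: the identity $Q_2(\tfrac12;t)=2\pi e^{-\pi t/4}\prod_{k\geq1}(1-e^{-2\pi tk})^3$ is correct, and with $q=e^{-2\pi}$ one gets $\prod(1-q^k)^3\geq 1-3q/(1-q)\approx 1-0.005613>1-\tfrac1{175}\approx 1-0.005714$, while at $\beta=0$ the bound $\exp(q/(1-q))+8\,\tfrac{q}{1-q}\exp(2q/(1-q))\approx 1.0169<1+\tfrac1{55}$ also closes. What the paper's series route buys is that the two numerical tails are single explicit sums evaluated once at $t=1$ (monotonicity in $t$ being obvious termwise); what your route buys is the clean closed product at $\beta=\tfrac12$ and avoidance of a second differentiation. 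Be aware that your lower-bound margin is razor-thin ($3q/(1-q)=0.0056128$ versus $\tfrac1{175}=0.0057143$), so the elementary inequalities $\prod(1-x_k)\geq 1-\sum x_k$ and the geometric-series bound must be invoked exactly as you state them --- any further loss (e.g.\ bounding $\sum_k q^k$ by $2q$) would break the constant. With the numerics checked, both routes deliver the lemma.
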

\begin{proof}
	From Lemma \ref{lem_aux1} we know that
	\begin{equation}
		Q_2\left( \tfrac{1}{2}; t \right) \leq Q_2(\beta; t) \leq Q_2(0;t).
	\end{equation}
	Hence, it suffices to show the inequalities
	\begin{equation}
		A_2(t) \leq Q_2\left( \tfrac{1}{2}; t \right)
		\qquad \textnormal{ and } \qquad
		Q_2(0;t) \leq B_2(t).
	\end{equation}
	By definition of $Q_2$, we see that 
	\begin{equation}
		Q_2\left( \tfrac{1}{2}; t \right) = - \partial_\beta \widehat{\vartheta}_2(\beta;t) \Bigg|_{\beta = \tfrac{1}{2}}.
	\end{equation}
	First, we assume that $ t \geq 1$. We differentiate $\widehat{\vartheta}_2$ with respect to $\beta$ to get
	\begin{align}
		Q_2\left( \tfrac{1}{2}; t \right) & = 2 \pi \sum_{k \in \Z} \sin\left(\pi \left(k + \tfrac{1}{2}\right) \right) \left(k+ \tfrac{1}{2}\right) \, e^{- \pi t \left(k + \tfrac{1}{2} \right)^2} \\
		& = 2 \pi \sum_{k \in \Z} (-1)^k \left(k+ \tfrac{1}{2}\right) \, e^{- \pi t \left(k + \tfrac{1}{2} \right)^2}\\
		& \geq 2 \pi \, e^{-\tfrac{\pi t}{4}} \left( 1 - \sum_{k \geq 1} \left(2k + 1\right) \, e^{- \pi t \left(k^2 + k \right)} \right).
	\end{align}
	For the last inequality we paired the terms of $k$ with terms of $-k-1$. Now, we note that the last series is decreasing in $t$ and, hence, as $t \geq 1$ get the following estimate; 
	\begin{equation}
		\sum_{k \geq 1} \left(2k + 1\right) \, e^{- \pi t (k^2+k)} \leq \sum_{k \geq 1} \left(2k + 1\right) \, e^{- \pi (k^2+k)} < 0.00560237 < \frac{1}{175}.
	\end{equation}
	Therefore, for $t \geq 1$ we have
	\begin{equation}
		Q_2(\beta;t) \geq 2 \pi \left(1- \tfrac{1}{175} \right) e^{-\tfrac{\pi t}{4}}=A_2(t).
	\end{equation}
	For the case $t < 1$, we use the functional equation \eqref{eq_theta2_functional} which allows us to switch to estimating $\vartheta_2$, given by \eqref{eq_theta2+-}, for $t > 1$.
	\begin{align}
		Q_2\left( \tfrac{1}{2}; \tfrac{1}{t} \right) & = - \partial_\beta \, \sqrt{t} \, \vartheta_2 \left( \beta; t \right) \Bigg|_{\beta = \tfrac{1}{2}}
		= 2 \pi \, t^{3/2} \sum_{k \in \Z} (-1)^k \left(k + \tfrac{1}{2} \right) e^{- \pi t \left(k + \tfrac{1}{2}\right)^2}\\
		& = 2 \pi \, t^{3/2} \, e^{-\tfrac{\pi t}{4}} \left(1 - \sum_{k \geq 1} \left(2k + 1\right) \, e^{- \pi t \left(k^2 + k \right)} \right) \geq 2 \pi \, t^{3/2} \, e^{-\tfrac{\pi t}{4}} \left(1 - \tfrac{1}{175} \right)=A_2(t),
	\end{align}
	where we have used the same estimates as in the $t\geq 1$ case above.
	
	Next, we will prove that $B_2$ bounds $Q_2$ from above. First, we note that the value of $Q_2(0;t)$ cannot be evaluated directly by using the series representation, as both, numerator and denominator vanish. Hence, we need to apply l'H\^o{}pital's rule and see that, for $t \geq 1$, we have
	\begin{align}
		Q_2(0; t) & = - \frac{1}{\pi} \, \partial_\beta^2 \, \widehat{\vartheta}_2(\beta;t) \Bigg|_{\beta = 0} = 4 \pi \sum_{k \in \Z} \left(k + \tfrac{1}{2} \right)^2 e^{- \pi t \left(k + \tfrac{1}{2} \right)^2} \\
		& \leq 2 \pi \, e^{-\tfrac{\pi t}{4}} \left( 1 + \sum_{k \geq 1} (2k+1)^2 e^{-\pi \left( k^2 + k \right)} \right) < 2 \pi \, e^{-\tfrac{\pi t}{4}} \left( 1 + \tfrac{1}{55}\right)=B_2(t).
	\end{align}
	For $t < 1$, we use the functional equation \eqref{eq_theta2_functional} again to use $\vartheta_2$ for $t > 1$.
	\begin{align}
		Q_2 \left(0;\tfrac{1}{t}\right) & = - \frac{1}{\pi} \, \partial_\beta^2 \, \sqrt{t} \, \vartheta_2(\beta;t) \Bigg|_{\beta = 0} = 2 \, t^{3/2} \, \sum_{k \in \Z} (-1)^k \left( 1 - 2 \pi \, t \, k^2 \right) e^{-\pi t k^2}\\
		& \leq 2 \, t^{3/2} \, \left(1 + 2 \sum_{k \geq 0} 2 \pi \, t \, (2k + 1)^2 \,  e^{- \pi (2k + 1)^2 t} \right)
	\end{align}
	To obtain the last inequality, we note that the term $k = 0$ contributes an amount of 1 and that the contributions for $\pm k$ even are negative, hence, we only needed to consider terms with $\pm k$ odd. Next, we observe that
	\begin{equation}
		\partial_t( t \, e^{- \alpha t}) < 0, \qquad \textnormal{ for } \frac{1}{\alpha} < t.
	\end{equation}
	Hence, for $t > 1$ and $k \geq 0$ the expression $t \, e^{- \pi (2k + 1)^2 t}$ is decreasing with respect to $t$. Therefore, we get the estimate
	\begin{equation}
		Q_2(0;t) \leq 2 \, t^{3/2} \left(1 + 2 \sum_{k \geq 0} 2 \pi (2k + 1)^2 \, e^{- \pi (2k + 1)^2} \right) < 2 \, t^{3/2} \underbrace{(1 + 0.55)}_{< \tfrac{\pi}{2}} < \pi \, t^{3/2}=B_2(t).
	\end{equation}
\end{proof}

\subsection{Results for the proof of the second main lemma}
We now state further auxiliary results which we will need for the proof of our second main lemma in Section \ref{sec_Main2}. We start with a result from \cite{FaulhuberSteinerberger_Theta_2017}.
\begin{proposition}[Faulhuber, Steinerberger \cite{FaulhuberSteinerberger_Theta_2017}]\label{pro_fauste}
	Let $t > 0$ be fixed and let $y \in \R_+$. Then, the following inequalities hold;
	\begin{equation}
		\theta_2(t y) \, \theta_2 \left( \tfrac{t}{y} \right) \leq \theta_2(t)^2
		\qquad \textnormal{ and } \qquad
		\theta_4(t y) \, \theta_4 \left( \tfrac{t}{y} \right) \leq \theta_4(t)^2.
	\end{equation}
	In both cases, equality holds if and only if $y = 1$. Furthermore, for $y > 1$,
	\begin{equation}
		\frac{\partial}{\partial y} \left[ \theta_2(t y) \, \theta_2 \left( \tfrac{t}{y} \right) \right] < 0
		\qquad \textnormal{ and } \qquad
		\frac{\partial}{\partial y} \left[ \theta_4(t y) \, \theta_4 \left( \tfrac{t}{y} \right) \right] < 0.
	\end{equation}
	For $ 0 < y < 1$, the expressions are monotonically increasing.
\end{proposition}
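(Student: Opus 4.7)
The plan is to reduce the two inequalities $\theta_j(ty)\theta_j(t/y)\leq \theta_j(t)^2$ for $j\in\{2,4\}$ to a single statement: the map $u\mapsto \log \theta_j(e^u)$ is strictly concave on $\R$. Indeed, setting $y=e^u$ and $h_{t,j}(u):=\log \theta_j(te^u)$, which is just a horizontal translate of $u\mapsto \log\theta_j(e^u)$, we have
\[
\log\bigl(\theta_j(ty)\theta_j(t/y)\bigr)=h_{t,j}(u)+h_{t,j}(-u).
\]
Strict concavity of $h_{t,j}$ means its derivative is strictly decreasing, so for $u>0$ one gets $h_{t,j}'(u)<h_{t,j}'(-u)$ and the $u$-derivative of the right-hand side is strictly negative; symmetrically, it is strictly positive for $u<0$. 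Since this sum is even in $u$, it attains its unique maximum at $u=0$, where it equals $2\log\theta_j(t)$. Translating back, $y\mapsto \theta_j(ty)\theta_j(t/y)$ is strictly decreasing on $(1,\infty)$, strictly increasing on $(0,1)$, and attains the value $\theta_j(t)^2$ only at $y=1$, which delivers all three assertions of the proposition at once.

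For $\theta_4$ I would read off the required concavity from the Jacobi triple product. With $q=e^{-\pi t}$ one has
\[
\log\theta_4(t)=\sum_{k\geq 1}\bigl[\log(1-q^{2k})+2\log(1-q^{2k-1})\bigr].
\]
After substituting $t=e^u$, each summand is of the form $u\mapsto \log(1-e^{-a e^u})$ with $a>0$. Writing $v:=ae^u$, a direct computation yields
\[
\frac{d^2}{du^2}\log(1-e^{-a e^u})=v\cdot\frac{(1-v)e^v-1}{(e^v-1)^2},
\]
and since $\psi(v):=(1-v)e^v-1$ satisfies $\psi(0)=0$ and $\psi'(v)=-ve^v<0$ on $(0,\infty)$, we have $\psi(v)<0$ there. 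Hence each summand is strictly concave in $u$, and (by the rapid convergence of the differentiated series on compact subsets of $\R$) strict concavity passes to $u\mapsto \log\theta_4(e^u)$. To obtain the $\theta_2$ statement I would then invoke the Jacobi identity $\theta_2(t)=t^{-1/2}\theta_4(1/t)$, which under $t=e^u$ becomes $\log\theta_2(e^u)=-\tfrac{u}{2}+\log\theta_4(e^{-u})$: the affine term is killed by the second derivative, and the remaining summand is strictly concave in $u$ because we are composing the strictly concave map $v\mapsto \log\theta_4(e^v)$ with the affine $v=-u$.

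The real obstacle is the strict concavity claim for $\theta_2$ itself. The direct Jacobi product $\theta_2(t)=2e^{-\pi t/4}\prod_{k\geq 1}(1-q^{2k})(1+q^{2k})^2$ contains the factors $(1+q^{2k})^2$, and the analogous calculation for $u\mapsto \log(1+e^{-ae^u})$ gives $v\cdot\frac{(v-1)e^v-1}{(e^v+1)^2}$, whose numerator changes sign near $v=1$. So these summands are \emph{not} concave individually, and one genuinely has to avoid treating them in isolation; rerouting through $\theta_4$ via the functional equation is the cleanest workaround I see. Once strict concavity of $u\mapsto \log\theta_j(e^u)$ is in hand for $j\in\{2,4\}$, the inequalities, the equality cases $y=1$, and the strict monotonicity on $(0,1)$ and $(1,\infty)$ all follow at once from the reduction in the first paragraph.
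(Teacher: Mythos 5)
Your argument is correct, and it is worth noting that the paper itself offers no proof of this proposition: it is imported verbatim from the cited work of Faulhuber and Steinerberger, so there is no in-paper argument to compare against. Your reduction is sound: strict concavity of $u\mapsto\log\theta_j(e^u)$ gives $h'(u)<h'(-u)$ for $u>0$, hence strict monotonicity of $h(u)+h(-u)$ on each side of $u=0$ and the unique maximum at $y=1$, which yields all three claims simultaneously. The computation $\tfrac{d^2}{du^2}\log(1-e^{-ae^u})=v\,\psi(v)/(e^v-1)^2$ with $\psi(v)=(1-v)e^v-1<0$ is right, termwise differentiation is harmless given the superexponential decay in $k$, and you correctly identify the one genuine trap: the factors $(1+q^{2k})^2$ in the triple product for $\theta_2$ are not individually log-concave in $u$, so one must pass through $\theta_4$ via Jacobi's imaginary transformation $\theta_2(t)=t^{-1/2}\theta_4(1/t)$ (note the paper's equation \eqref{eq_fcteqtheta24} misprints this as $\theta_4(1/\sqrt{t})$; you use the correct form). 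Your route is essentially the mechanism of the cited source, where the same concavity appears as monotonicity of the logarithmic derivative $t\,\theta_4'(t)/\theta_4(t)$ and is likewise transferred to $\theta_2$ by the functional equation; what your write-up adds is a clean packaging of the inequality, the equality case, and the two monotonicity statements as one consequence of a single concavity lemma.
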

For completeness, we remark that in \cite{FaulhuberSteinerberger_Theta_2017} it was also proven that the inequalities are reversed for the expression $\theta_3(t y) \, \theta_3 \left( \tfrac{t}{y} \right)$. This was also established by Montgomery \cite{Montgomery_Theta_1988}.

Another result we will need is the following.
\begin{lemma}\label{lem_1/4}
	For $t > 0$ and $k \in \Z$, we have
	\begin{equation}
		\vartheta \left( \tfrac{1}{4}; t \right) = \vartheta \left( \tfrac{1}{4} + \tfrac{k}{2}; t \right).
	\end{equation}
\end{lemma}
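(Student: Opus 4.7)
The plan is very short. Recall the definition
\[
\vartheta(\beta;t) = \sum_{n \in \Z} e^{-\pi t (n+\beta)^2}.
\]
Two elementary symmetries of $\vartheta(\cdot\,;t)$ do all the work: periodicity of period $1$ in $\beta$, obtained by the re-indexing $n \mapsto n-1$ (so $\vartheta(\beta+1;t) = \vartheta(\beta;t)$), and evenness in $\beta$, obtained by the re-indexing $n \mapsto -n$ (so $\vartheta(-\beta;t) = \vartheta(\beta;t)$).

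Given $k \in \Z$, I would split according to the parity of $k$. If $k$ is even, then $k/2 \in \Z$ and periodicity immediately gives $\vartheta(\tfrac14 + \tfrac{k}{2};t) = \vartheta(\tfrac14;t)$. If $k$ is odd, write $k = 2m+1$ with $m \in \Z$; then, using periodicity to eliminate the integer shift $m$ and evenness on the remaining half-integer shift,
\[
\vartheta\!\left(\tfrac14 + \tfrac{k}{2};\,t\right) = \vartheta\!\left(\tfrac34 + m;\,t\right) = \vartheta\!\left(\tfrac34;\,t\right) = \vartheta\!\left(-\tfrac14;\,t\right) = \vartheta\!\left(\tfrac14;\,t\right),
\]
where the last equality uses evenness together with one more application of periodicity ($-\tfrac14$ and $\tfrac34$ differ by an integer, but evenness sends $\tfrac34 \mapsto -\tfrac34$, which equals $\tfrac14$ modulo $1$).

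There is no real obstacle here; the only thing to be careful about is combining the two symmetries correctly in the odd case, since neither periodicity alone nor evenness alone suffices to identify $\vartheta(\tfrac14;t)$ with $\vartheta(\tfrac34;t)$. Together, however, they force $\vartheta$ to be invariant under the involution $\beta \mapsto \tfrac12 - \beta$ (send $\beta \mapsto \beta - \tfrac12$ by no means, rather: $\vartheta(\tfrac12 - \beta;t) = \vartheta(-(\tfrac12-\beta);t) = \vartheta(\beta - \tfrac12;t) = \vartheta(\beta + \tfrac12;t)$ via evenness plus integer-period), and $\beta = \tfrac14$ is precisely the fixed point of this involution, which is the conceptual reason the identity holds.
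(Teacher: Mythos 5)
Your proof is correct and rests on exactly the same facts the paper invokes, namely that $\vartheta(\cdot\,;t)$ is even and $1$-periodic in $\beta$ (the paper simply cites Montgomery's lemma, i.e.\ Lemma~\ref{lem_aux1_Mont} and the symmetries recorded just before it, whereas you verify these symmetries directly from the series by the re-indexings $n\mapsto n-1$ and $n\mapsto -n$). The parity split on $k$ and the identification $\vartheta(\tfrac34;t)=\vartheta(-\tfrac14;t)=\vartheta(\tfrac14;t)$ are exactly what is needed, so there is nothing to add.
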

\begin{proof}
	This is actually a corollary of Montgomery's lemma, which we stated as Lemma \ref{lem_aux1_Mont} in this work.
\end{proof}

\begin{lemma}\label{lem_1/4_theta2}
	For $t > 0$, we have
	\begin{equation}
		\vartheta \left( \tfrac{1}{4}; t \right) = \frac{1}{2} \, \theta_2 \left( \tfrac{t}{4} \right)
	\end{equation}
\end{lemma}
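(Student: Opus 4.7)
The identity to prove is a purely algebraic rearrangement of series; no analytic work is needed, only a suitable index substitution together with the previous lemma.

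The plan is to start from the right hand side and split the sum defining $\theta_2(t/4)$ according to the parity of the summation index. Writing
\[
\theta_2(t/4) = \sum_{k \in \Z} e^{-\pi (t/4) (k + 1/2)^2},
\]
I would substitute $k = 2j$ in the even part and $k = 2j - 1$ in the odd part. After pulling the factor $(t/4)\cdot 4 = t$ out of the exponent, the even part becomes $\sum_{j \in \Z} e^{-\pi t (j + 1/4)^2} = \vartheta(1/4; t)$, and the odd part becomes $\sum_{j \in \Z} e^{-\pi t (j - 1/4)^2}$, which by the change of variable $j \mapsto -j$ (i.e. using that the Gaussian is even) equals $\vartheta(1/4; t)$ as well. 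Thus $\theta_2(t/4) = 2\vartheta(1/4; t)$, which is the claim.

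Alternatively, one may invoke Lemma \ref{lem_1/4} directly: it gives $\vartheta(1/4; t) = \vartheta(3/4; t)$, so it suffices to recognize that $\vartheta(1/4; t) + \vartheta(3/4; t) = \theta_2(t/4)$. This follows by the same rescaling: the terms of $\vartheta(1/4; t)$ index the even residues and the terms of $\vartheta(3/4; t)$ the odd residues of the single series defining $\theta_2(t/4)$ (after the substitution $m = 2k$, respectively $m = 2k+1$, which turns $(k + 1/4)^2$ into $(m + 1/2)^2 / 4$ and similarly for the $3/4$ shift).

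There is no real obstacle; the only care required is keeping track of the factor of $4$ when one rescales the exponent from $-\pi t(k+1/4)^2$ to $-\pi(t/4)(2k+1/2)^2$. Because every step is an elementary reindexing of absolutely convergent series, the calculation is routine and no convergence issues arise for $t > 0$.
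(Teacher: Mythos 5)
Your argument is correct, but it takes a genuinely different route from the paper. The paper proves the identity by Fourier duality: it applies the Poisson summation formula to write $\vartheta\left(\tfrac{1}{4};t\right)=\tfrac{1}{\sqrt{t}}\sum_{k}e^{-\pi k^2/t}\,i^{k}$, observes that the terms with $k$ odd cancel in pairs so that only $k=2m$ survives with sign $(-1)^m$, and then converts the resulting $\theta_4$-type sum back into a $\theta_2$-sum via the Jacobi identity. You instead work entirely on the $\theta_2\left(\tfrac{t}{4}\right)$ side, splitting the series by the parity of the summation index and rescaling the exponent, so that the even residues give $\vartheta\left(\tfrac{1}{4};t\right)$ and the odd residues give $\vartheta\left(-\tfrac{1}{4};t\right)=\vartheta\left(\tfrac{1}{4};t\right)$ by evenness of the Gaussian. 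Your computation checks out: $\left(2j+\tfrac{1}{2}\right)^2=4\left(j+\tfrac{1}{4}\right)^2$ and $\left(2j-\tfrac{1}{2}\right)^2=4\left(j-\tfrac{1}{4}\right)^2$, and absolute convergence justifies the reindexing. What your approach buys is a completely elementary proof that needs neither Poisson summation nor Lemma \ref{lem_1/4} (your second variant uses it, but your first does not), and it makes the factor $\tfrac{1}{2}$ transparent as the two parity classes of a single series. What the paper's approach buys is consistency with the overall theme of the article, where the $\vartheta$/$\widehat{\vartheta}$ duality under the Fourier transform is the organizing principle, and it exhibits the lemma as an instance of the character sum $\sum_k i^k e^{-\pi k^2/t}$.
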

\begin{proof}
	The proof makes use of the Poisson summation formula and pairing indices $k$ and $-k$. We compute
	\begin{align}
		\vartheta \left( \tfrac{1}{4}; t \right) & = \sum_{k \in \Z} e^{-\pi t \left(k + \tfrac{1}{4} \right)^2} = \frac{1}{\sqrt{t}} \, \sum_{k \in \Z} e^{-\tfrac{\pi}{t} k^2} e^{2 \pi i \tfrac{k}{4}} = \frac{1}{\sqrt{t}} \, \sum_{k \in \Z} e^{-\tfrac{\pi}{t} k^2} i^k\\
		& = \frac{1}{\sqrt{t}} \, \sum_{k \in \Z} e^{- \tfrac{\pi}{t} (2k)^2} (-1)^k = \frac{1}{2} \, \sum_{k \in \Z} e^{- \pi \tfrac{t}{4} \left(k + \tfrac{1}{2} \right)^2}.
	\end{align}
\end{proof}

\section{The first main lemma}\label{sec_Main1}
By combining the two auxiliary results, Montgomery derives his first main lemma. His result tells us that, as the lattice (shearing) parameter $x$ increases from 0 to $\tfrac{1}{2}$, the value of the 2-dimensional lattice theta function $\theta_\L(\alpha)$ decreases. We will now establish a complementary lemma for our family of theta functions by partially mimicking Montgomery's proof. However, we have to put more effort in achieving our result because the duality given by the Poisson summation formula is more involved than in Montgomery's case, where the duality only resulted in a re-scaling.
\begin{lemma}\label{lem_Main1_c}
	Let $\alpha \geq 1$ be fixed, $x \in \left(0, \tfrac{1}{2} \right)$ and $y \geq \frac{1}{\sqrt{2}}$, then
	\begin{equation}
		\frac{\partial}{\partial x} \, \theta_\L^c (\alpha) > 0.
	\end{equation}
\end{lemma}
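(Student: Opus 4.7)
My plan is to convert the $x$-derivative of $\theta^c_\L(\alpha)$ into a rapidly convergent series where Montgomery's auxiliary function $Q$ from Lemma \ref{lem_aux1_Mont} appears naturally, and then argue that the leading $l=0$ term dominates. I start by completing the square in $(k+\tfrac{1}{2})$ in the exponent of \eqref{eq_theta_c}, which rewrites
\begin{equation}
\theta^c_\L(\alpha) = \sum_{l\in\Z} e^{-\pi\alpha y(l+1/2)^2}\, \vartheta\!\left(\tfrac{1}{2} + x(l+\tfrac{1}{2});\, \alpha/y\right),
\end{equation}
with $\vartheta$ as in \eqref{eq_hk_shift}. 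Applying Poisson summation $\vartheta(\beta;t) = t^{-1/2}\widehat\vartheta(\beta;1/t)$ and the defining identity $\partial_\beta \widehat\vartheta(\beta;t) = -\sin(2\pi\beta)\,Q(\beta;t)$, together with $\sin(2\pi(\tfrac{1}{2}+x(l+\tfrac{1}{2}))) = -\sin(2\pi x(l+\tfrac{1}{2}))$, the chain rule yields
\begin{equation}
\partial_x\theta^c_\L(\alpha) = \sqrt{y/\alpha}\sum_{l\in\Z} e^{-\pi\alpha y(l+1/2)^2}(l+\tfrac{1}{2})\sin\!\bigl(2\pi x(l+\tfrac{1}{2})\bigr)\,Q\!\left(\tfrac{1}{2}+x(l+\tfrac{1}{2});\, y/\alpha\right).
\end{equation}

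Next, because $Q$ is even and $1$-periodic by Lemma \ref{lem_aux1_Mont}, the terms indexed by $l\geq 0$ and by $-l-1$ contribute equally, so the sum folds to twice the sum over $l\geq 0$. The $l=0$ term is manifestly positive on $(0,1/2)$, and I lower-bound it using the concavity estimate $\sin(\pi x)\geq 2x$ on $[0,1/2]$ together with the lower bound $Q\geq A(y/\alpha)$ from Lemma \ref{lem_aux2_Mont}; this yields a contribution of at least $2x\sqrt{y/\alpha}\,e^{-\pi\alpha y/4} A(y/\alpha)$. For the tail $l\geq 1$, I bound each term in absolute value by $|\sin u|\leq |u|$ and $Q\leq B(y/\alpha)$, producing an upper bound proportional to $x\,\sqrt{y/\alpha}\,B(y/\alpha)\sum_{l\geq 1}(l+\tfrac{1}{2})^2 e^{-\pi\alpha y(l+1/2)^2}$. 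The common factor of $x$ cancels, which is why a linear (rather than trivial) bound on $|\sin|$ is essential, and the proof reduces to the $x$-independent inequality
\begin{equation}
A(y/\alpha) > 2\pi\,B(y/\alpha)\sum_{l\geq 1}(l+\tfrac{1}{2})^2 e^{-\pi\alpha y\, l(l+1)}.
\end{equation}

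Finally, I verify this numerical inequality by splitting into the two cases of Lemma \ref{lem_aux2_Mont}. When $y/\alpha\geq 1$, the ratio $A/B$ is essentially $1$, and since $\alpha y\geq 1$ in this regime, the series on the right is controlled by $(\tfrac{3}{2})^2 e^{-2\pi}$ with an ample margin. When $y/\alpha<1$, the ratio becomes $e^{-\pi\alpha/(4y)}$; the hypotheses force $\alpha y\geq 1/\sqrt{2}$, so the $l=1$ term of the series decays at least like $(\tfrac{3}{2})^2 e^{-\sqrt{2}\pi}$, and the extremal case $\alpha=1$, $y=1/\sqrt{2}$ can be checked directly. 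The main obstacle is precisely this numerical case analysis near the boundary $\alpha y\approx 1/\sqrt{2}$, where the Gaussian decay is mildest: one has to track both the ratio $\alpha/y$ (controlling $A/B$) and the product $\alpha y$ (controlling the series decay) simultaneously and verify that the explicit constants from Lemma \ref{lem_aux2_Mont} do not erode the margin over the admissible region.
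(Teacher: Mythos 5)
Your proposal follows the paper's proof almost step for step: the same factorization of $\theta^c_\L$ through $\vartheta\left(\tfrac{1}{2}+x(l+\tfrac{1}{2});\alpha/y\right)$, the same passage to Montgomery's $Q$ via Poisson summation, the same folding of $l$ with $-l-1$, and the same reduction to an $x$-free inequality between $A(y/\alpha)/B(y/\alpha)$ and a Gaussian tail sum, settled by the same two cases $y/\alpha<1$ and $y/\alpha\geq 1$. The one substantive difference is how you eliminate $x$: you use $|\sin u|\leq |u|$ against $\sin(\pi x)\geq 2x$, which gives $|\sin(2\pi x(l+\tfrac{1}{2}))|/\sin(\pi x)\leq \pi(l+\tfrac{1}{2})$, whereas the paper proves (by induction on $l$ via the addition theorems) the sharper bound $2(l+\tfrac{1}{2})$. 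This costs you a factor of $\pi/2$: your target is $A/B>2\pi\sum_{l\geq1}(l+\tfrac{1}{2})^2e^{-\pi\alpha y\, l(l+1)}$ versus the paper's $A/B>4\sum_{l\geq1}(l+\tfrac{1}{2})^2e^{-\pi\alpha y\, l(l+1)}$.

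That factor matters precisely in the case $y/\alpha<1$ that you flag as the main obstacle. The paper closes this case by weakening $\alpha y\geq \alpha/(2y)$ and then $\alpha/y>1$, arriving at $4\sum_{l\geq1}(l+\tfrac{1}{2})^2e^{-\frac{\pi}{2}(l^2+l-\frac{1}{2})}=0.857\ldots<1$; the identical chain applied to your target gives $2\pi\times 0.214\ldots\approx 1.35>1$ and fails. Your inequality is nevertheless true, and going to the corner $(\alpha,y)=(1,1/\sqrt{2})$ is the right instinct, but ``check the extremal case directly'' is not yet a proof: the two sides depend on the different combinations $\alpha/y$ and $\alpha y$, so you must first justify that the corner is extremal. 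This does work: writing the ratio of the two sides as $2\pi\sum_{l\geq1}(l+\tfrac{1}{2})^2e^{-\pi\alpha\left(y\,l(l+1)-\frac{1}{4y}\right)}$, each exponent has $y\,l(l+1)-\frac{1}{4y}>0$ and increasing in $y$, so every term is decreasing in both $\alpha$ and $y$ on $[1,\infty)\times[1/\sqrt{2},\infty)$, and the corner value is $2\pi e^{\pi\sqrt{2}/4}\sum_{l\geq1}(l+\tfrac{1}{2})^2e^{-\pi l(l+1)/\sqrt{2}}\approx 0.50<1$. So either add this monotonicity observation, or adopt the paper's sharper sine-ratio bound, after which the paper's simpler substitutions suffice.
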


\begin{proof}
	Note that we can write $\theta_\L^c$ in terms of $\vartheta$ defined by \eqref{eq_hk_shift} in the following way;
	\begin{equation}
		\theta_\L^c(\alpha) = \sum_{l \in \Z} e^{-\tfrac{\pi \alpha}{y} y^2 \left(l+\tfrac{1}{2} \right)^2}
		\underbrace{\sum_{k \in \Z} e^{-\tfrac{ \pi \alpha}{y} \left( k + \left( \tfrac{1}{2} + x \left(l + \tfrac{1}{2} \right) \right) \right)^2}}_{= \vartheta\left( \tfrac{1}{2} + x \left(l + \tfrac{1}{2} \right); \tfrac{\alpha}{y} \right)}.
	\end{equation}
	Differentiation of $\theta_\L^c$ with respect to $x$ yields
	\begin{align}\label{eq_diff_x}
		\frac{\partial}{\partial x} \, \theta_\L^c(\alpha)
		& = \sum_{l \in \Z} e^{- \pi \alpha y \left( l + \tfrac{1}{2} \right)^2} \left(l + \tfrac{1}{2} \right) \, \vartheta' \left( \tfrac{1}{2} + x \left(l + \tfrac{1}{2} \right); \tfrac{\alpha}{y} \right),
	\end{align}
	where $\vartheta'$ denotes the derivative of $\vartheta$ with respect to the first argument. Note that $\vartheta'(\beta;t)$ is an odd function of $\beta$ with period 1. By pairing the indices $l$ and $-l-1$, we can now re-write the series as
	\begin{equation}
		2 \sum_{l \geq 0} \left( l + \tfrac{1}{2} \right) e^{-\pi \alpha y \left( l + \tfrac{1}{2} \right)^2} \, \vartheta' \left( \tfrac{1}{2} + x \left(l + \tfrac{1}{2} \right); \tfrac{\alpha}{y} \right).
	\end{equation}
	We want to show that the above series (ignoring the factor $2$) is positive for $x \in \left(0, \tfrac{1}{2} \right)$.  By combining \eqref{eq_hk_Fourier} and Lemma \ref{lem_aux2_Mont}, we see that the above expression is certainly no less than
	\begin{align}
		\sqrt{\tfrac{y}{\alpha}} \, & \tfrac{1}{2} \, e^{-\pi \tfrac{\alpha y}{4}} A \left( \tfrac{y}{\alpha} \right) \underbrace{\left( - \sin\left( 2 \pi \left( \tfrac{1}{2} + \tfrac{x}{2} \right) \right) \right)}_{= \sin( \pi x) > 0}\\
		& - \sqrt{\tfrac{y}{\alpha}} \, \sum_{l \geq 1} \left( l + \tfrac{1}{2} \right) e^{-\pi \alpha y \left( l + \tfrac{1}{2} \right)^2} B\left( \tfrac{y}{\alpha} \right) \underbrace{\left| \sin\left( 2 \pi \left( \tfrac{1}{2} + x \left(l + \tfrac{1}{2} \right) \right) \right) \right|}_{=\left| \sin\left( 2 \pi x \left(l + \tfrac{1}{2} \right) \right) \right|}.
	\end{align}
	Showing that this is positive is equivalent to showing that
	\begin{equation}
		\frac{A\left( \tfrac{y}{\alpha} \right)}{B\left( \tfrac{y}{\alpha} \right)} > 2 \, \sum_{l \geq 1} \left( l + \tfrac{1}{2} \right) e^{-\pi \alpha y \left( l^2 + l \right)} \frac{\left| \sin\left( 2 \pi x \left(l + \tfrac{1}{2} \right) \right) \right|}{\sin( \pi x)}, \qquad x \in \left(0, \tfrac{1}{2} \right).
	\end{equation}
	We will use the following fact
	\begin{equation}
		\left| \frac{\sin\left( 2 \pi x \left(l + \tfrac{1}{2} \right) \right)}{\sin( \pi x)}\right| \leq 2 \left| l + \tfrac{1}{2} \right|, \qquad l \in \Z.
	\end{equation}
	For $l \in \N_0$, the above estimate follows by induction (it is certainly true for $l = 0$) and using the addition theorems for sine and cosine. The result for $l \in \Z \backslash \N_0$ follows from the symmetry of the sine function.
	
	Under the assumption that $x \in \left(0, \tfrac{1}{2} \right)$, we succeed in proving the statement of this lemma if we can show that
	\begin{equation}\label{eq_proof_x}
		\frac{A\left( \tfrac{y}{\alpha} \right)}{B\left( \tfrac{y}{\alpha} \right)} > 4 \, \sum_{l \geq 1} \left( l + \tfrac{1}{2} \right)^2 e^{-\pi \alpha y \left( l^2 + l \right)}.
	\end{equation}
	We have to distinguish 2 cases.
	
	\textit{First case} ($\alpha > y$): By the assumption that $y \geq \tfrac{1}{\sqrt{2}}$, we get that
	\begin{equation}
		\alpha y = \frac{\alpha}{y} y^2 \geq \frac{\alpha}{2 y}.
	\end{equation}
	For $\tfrac{y}{\alpha} < 1$, the left-hand side of \eqref{eq_proof_x} becomes
	\begin{equation}
		\frac{A\left( \tfrac{y}{\alpha} \right)}{B\left( \tfrac{y}{\alpha} \right)} = e^{- \tfrac{\pi}{4 \tfrac{y}{\alpha}}} = e^{- \tfrac{\pi \alpha}{4 y}},
	\end{equation}
	whereas we can estimate the right-hand side of \eqref{eq_proof_x} by
	\begin{align}
		4 \, \sum_{l \geq 1} \left( l + \tfrac{1}{2} \right)^2 e^{-\tfrac{\pi \alpha}{2 y} \left( l^2 + l \right)}
		& = 4 \, e^{- \tfrac{\pi \alpha}{4 y}} \, \sum_{l \geq 1} \left( l + \tfrac{1}{2} \right)^2 e^{-\tfrac{\pi \alpha}{2 y} \left( l^2 + l - \tfrac{1}{2}\right)}\\
		&  {<} 4 \, e^{- \tfrac{\pi \alpha}{4 y}} \, \sum_{l \geq 1} \left( l + \tfrac{1}{2} \right)^2 e^{-\tfrac{\pi}{2} \left( l^2 + l - \tfrac{1}{2}\right)}.
	\end{align}
	Computing the value of the series yields
	\begin{equation}
		4 \sum_{l \geq 1} \left( l + \tfrac{1}{2} \right)^2 e^{-\tfrac{\pi}{2} \left( l^2 + l - \tfrac{1}{2}\right)} = 0.857448 \ldots < 1.
	\end{equation}
	This shows that \eqref{eq_proof_x} holds in this case.
	
	\textit{Second case} ($\alpha \leq y$): Now, the left-hand side of \eqref{eq_proof_x} becomes
	\begin{equation}
		\frac{A\left( \tfrac{y}{\alpha} \right)}{B\left( \tfrac{y}{\alpha} \right)} = \frac{1-\tfrac{1}{3000}}{1+\tfrac{1}{3000}} {=\frac{2999}{3001}}.
	\end{equation}
	By assumption,
	\begin{equation}
		\alpha y \geq \alpha^2 \geq 1.
	\end{equation}
	Hence, the right-hand side of \eqref{eq_proof_x} can be estimated by
	\begin{equation}\label{eq_2999_3001}
		4 \, \sum_{l \geq 1} \left( l + \tfrac{1}{2} \right)^2 e^{-\pi \alpha y \left( l^2 + l \right)}  {\leq} 4 \, \sum_{l \geq 1} \left( l + \tfrac{1}{2} \right)^2 e^{-\pi \left( l^2 + l \right)} = 0.0808504 \ldots {< \frac{2999}{3001}} \, ,
	\end{equation}
	which shows that \eqref{eq_proof_x} also holds in this case.
\end{proof}

We note that, in the proof, we use the assumption that $\alpha \geq 1$ only at the very end in equation \eqref{eq_2999_3001}. By trial and error, we find out that the proof also works for $\alpha \geq \tfrac{21}{50} = 0.42$. This is already close to the optimal bound for $\alpha$ with the above methods and estimates.

We also want to show that the result holds for $\alpha < 1$. In \cite{Montgomery_Theta_1988}, the functional equation \eqref{eq_functional} and the fact that $\Theta_\L(0,0;\alpha) = \widehat{\Theta}_\L(0,0;\alpha)$, allowed to conclude that

\begin{equation}
	\Theta_\L(0,0;\alpha) = \tfrac{1}{\alpha} \Theta_\L(0,0,\tfrac{1}{\alpha}),
	\quad \text{ or, equivalently } \quad
	\widehat{\Theta}_\L(0,0;\alpha) = \tfrac{1}{\alpha} \widehat{\Theta}_\L(0,0,\tfrac{1}{\alpha}).
\end{equation}

Hence, in \cite{Montgomery_Theta_1988} the result for $\alpha \geq 1$ already implies the result for $\alpha < 1$ (and vice versa).

However, we need to use the functional equation for $\theta_\L^c (\alpha) = \Theta_\L(\tfrac{1}{2}, \tfrac{1}{2}; \alpha)$ and $\theta_\L^\pm (\alpha) = \widehat{\Theta}_\L(\tfrac{1}{2}, \tfrac{1}{2}; \alpha)$, which do not coincide. Therefore, we need to establish another auxiliary result.

\begin{lemma}\label{lem_Main1_pm}
	Let $\alpha \geq 1$ be fixed, $x \in \left(0, \tfrac{1}{2} \right)$ and $y \geq \frac{1}{\sqrt{2}}$, then
	\begin{equation}
		\frac{\partial}{\partial x} \, \theta_\L^\pm (\alpha) > 0.
	\end{equation}
\end{lemma}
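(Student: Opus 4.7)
The plan is to mirror the proof of Lemma~\ref{lem_Main1_c} almost step-by-step, but with two structural modifications: I complete the square in $k$ (rather than $l$) so that the inner sum becomes an alternating Gaussian series, and I route through $\widehat{\vartheta}_2$ (not $\widehat{\vartheta}$) so that the new auxiliary function $Q_2$ from \eqref{eq:defQ2} and its bounds from Lemmas~\ref{lem_aux1}--\ref{lem_aux2} are available. Note that I cannot simply invoke the functional equation \eqref{eq_fcteqtheta} together with Lemma~\ref{lem_Main1_c}, because \eqref{eq_fcteqtheta} swaps $\theta^c_\L$ and $\theta^\pm_\L$ at reciprocal arguments, and the available $\theta^c_\L$ estimate is only for $\alpha \geq 1$, which would deliver the result in the wrong regime.

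\medskip

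First I would complete the square in $k$, namely $k^2+2xkl+(x^2+y^2)l^2 = (k+xl)^2 + y^2 l^2$, obtaining
\begin{equation}
\theta_\L^\pm(\alpha) \;=\; \sum_{l\in\Z}(-1)^l e^{-\pi\alpha y\, l^2}\, \vartheta_2\!\left(xl;\tfrac{\alpha}{y}\right),
\end{equation}
with $\vartheta_2$ as in \eqref{eq_theta2+-}. Then I would apply the functional equation \eqref{eq_theta2_functional} to rewrite the inner factor using $\widehat{\vartheta}_2$:
\begin{equation}
\theta_\L^\pm(\alpha) \;=\; \sqrt{\tfrac{y}{\alpha}} \sum_{l\in\Z}(-1)^l e^{-\pi\alpha y\, l^2}\, \widehat{\vartheta}_2\!\left(xl;\tfrac{y}{\alpha}\right).
\end{equation}
Differentiating in $x$ and using $\partial_\beta \widehat{\vartheta}_2(\beta;t) = -\sin(\pi\beta)\,Q_2(\beta;t)$ (from \eqref{eq:defQ2}):
\begin{equation}
\partial_x\, \theta_\L^\pm(\alpha) \;=\; -\sqrt{\tfrac{y}{\alpha}} \sum_{l\in\Z}(-1)^l\, l\, \sin(\pi xl)\, e^{-\pi\alpha y\, l^2}\, Q_2\!\left(xl;\tfrac{y}{\alpha}\right).
\end{equation}
The $l=0$ term vanishes, and pairing $l$ with $-l$ (using evenness of $Q_2$, oddness of $\sin$, and $(-1)^{-l}=(-1)^l$) yields
\begin{equation}
\partial_x\, \theta_\L^\pm(\alpha) \;=\; 2\sqrt{\tfrac{y}{\alpha}} \sum_{l\geq 1}(-1)^{l+1}\, l\, \sin(\pi xl)\, e^{-\pi\alpha y\, l^2}\, Q_2\!\left(xl;\tfrac{y}{\alpha}\right).
\end{equation}

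\medskip

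The $l=1$ term is strictly positive for $x\in(0,\tfrac12)$. To show that it dominates the rest, I would apply the bounds $A_2(y/\alpha)\leq Q_2 \leq B_2(y/\alpha)$ from Lemma~\ref{lem_aux2} together with the standard inequality $|\sin(\pi x l)|\leq l\sin(\pi x)$ for $l\geq 1$ and $x\in(0,\tfrac12]$ (proved by induction from the sine addition formula, exactly as in the proof of Lemma~\ref{lem_Main1_c}). After cancelling $\sin(\pi x)e^{-\pi\alpha y}$ from both sides, the positivity reduces to the single inequality
\begin{equation}
\frac{A_2(y/\alpha)}{B_2(y/\alpha)} \;>\; \sum_{l\geq 2} l^2\, e^{-\pi\alpha y(l^2-1)}.
\end{equation}

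\medskip

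Finally I would split into two cases as in Lemma~\ref{lem_Main1_c}. In the case $\alpha>y$ (so $y/\alpha<1$), the ratio equals $2(1-1/175)\,e^{-\pi\alpha/(4y)}$; since $y\geq 1/\sqrt{2}$ gives $\alpha y \geq \alpha/(2y)$, I can dominate each $e^{-\pi\alpha y(l^2-1)}$ by $e^{-\pi\alpha/(2y)(l^2-1)}$, factor out $e^{-\pi\alpha/(4y)}$, and use $\alpha/(2y)>1/2$ to bound the remaining tail by an explicit numerical series in $e^{-\pi/2\cdot(2l^2-3)/2}$ that is well below $2(1-1/175)$. In the case $\alpha\leq y$ (so $y/\alpha\geq 1$), the ratio equals $(1-1/175)/(1+1/55)>0.97$, while $\alpha y\geq \alpha \geq 1$ (using $\alpha\leq y$ and $\alpha\geq 1$), so the tail is bounded by $\sum_{l\geq 2}l^2 e^{-\pi(l^2-1)} < 4e^{-3\pi}+O(e^{-8\pi})$, which is tiny. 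The main (only) obstacle is the numerical bookkeeping in these two cases, but here the situation is actually more comfortable than for $\theta^c_\L$: the shift is now $xl$ rather than $\tfrac12+x(l+\tfrac12)$, so the $l\geq 2$ terms decay like $e^{-\pi\alpha y(l^2-1)}$ with a gap of $3$ rather than $2$ in the exponent, leaving ample slack in both cases.
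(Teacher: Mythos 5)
Your proposal is correct and follows essentially the same route as the paper's own proof: the same decomposition $\theta_\L^\pm(\alpha)=\sum_l(-1)^l e^{-\pi\alpha y l^2}\vartheta_2(xl;\tfrac{\alpha}{y})$, the same passage to $Q_2$ via the functional equation and Lemma \ref{lem_aux2}, the same reduction to $A_2(y/\alpha)/B_2(y/\alpha)>\sum_{l\geq 2}l^2e^{-\pi\alpha y(l^2-1)}$ using $|\sin(\pi lx)|\leq l\sin(\pi x)$, and the same two-case numerical verification with identical constants. The only (cosmetic) difference is that you rewrite the whole sum in terms of $\widehat{\vartheta}_2$ before differentiating, whereas the paper differentiates $\vartheta_2$ first and invokes the functional equation when bounding.
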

\begin{proof}
	We write $\theta_\L^\pm$ in terms of $\vartheta_2$ defined by \eqref{eq_theta2+-} in the following way;
	\begin{align}
		\theta_\L^\pm (\alpha) & = \sum_{l \in \Z} (-1)^l e^{- \pi \alpha y l^2} \underbrace{\sum_{k \in \Z} (-1)^k e^{- \tfrac{\pi \alpha}{y} (k + x l)^2}}_{= \vartheta_2 \left( x l; \tfrac{\alpha}{y} \right)}\\
		& = \vartheta_2 \left(0; \tfrac{\alpha}{y} \right) + 2 \sum_{l \geq 1} (-1)^l e^{- \pi \alpha y l^2} \vartheta_2 \left( x l; \tfrac{\alpha}{y} \right).
	\end{align}
	Here, we used the fact that $\vartheta_2(\beta;t)$ is an even function of $\beta$. Computing the derivative with respect to $x$ yields
	\begin{equation}\label{eq_theta_pm_derivative}
		\dfrac{\partial}{\partial x} \theta_\L^\pm (\alpha) = 2 \sum_{l \geq 1} (-1)^l e^{- \pi \alpha y l^2} l \, \vartheta_2' \left( x l; \tfrac{\alpha}{y} \right),
	\end{equation}
	where $\vartheta_2'$ denotes the derivative of $\vartheta_2$ with respect to the first argument and we used the fact that $\vartheta_2' \left(0; \tfrac{\alpha}{y} \right) = 0$.
	
	Next, we observe, by using Lemma \ref{lem_aux2} combined with the functional equation \eqref{eq_theta2_functional}, that the series in \eqref{eq_theta_pm_derivative} (ignoring the factor 2) is at least
	\begin{equation}
		\sqrt{\tfrac{y}{\alpha}} \, A_2 \left( \tfrac{y}{\alpha} \right) \sin( \pi x ) \, e^{- \pi \alpha y} - \sqrt{\tfrac{y}{\alpha}} \sum_{l \geq 2} B_2 \left( \tfrac{y}{\alpha} \right) \, |\sin( \pi x l)| \, l \, e^{- \pi \alpha y l^2},
	\end{equation}
	which we wish to show is positive. We note that, for $x \in \left( 0, \tfrac{1}{2}\right)$, we have, by induction and using the addition theorems for sine and cosine, that
	\begin{equation}
		\frac{|\sin(\pi l x)|}{\sin(\pi x)} \leq l, \qquad l \in \N.
	\end{equation}
	Therefore, we prove that the expression in \eqref{eq_theta_pm_derivative} is positive, if we can show that
	\begin{equation}\label{eq_proof_xpm}
		\frac{A_2 \left( \tfrac{y}{\alpha} \right)}{B_2 \left( \tfrac{y}{\alpha} \right)} > \sum_{l \geq 2} l^2 e^{-\pi \alpha y (l^2 - 1)}.
	\end{equation}
	We distinguish 2 cases
	
	\textit{First case} ($\alpha > y)$:
	By assumption $y \geq \tfrac{1}{\sqrt{2}}$, so
	\begin{equation}
		\alpha y = \frac{\alpha}{y} y^2 \geq \frac{\alpha}{2 y}.
	\end{equation}
	As $\tfrac{y}{\alpha} < 1$, the left-hand side of \eqref{eq_proof_xpm} is 
	\begin{equation}
		\frac{A_2 \left( \tfrac{y}{\alpha} \right)}{B_2 \left( \tfrac{y}{\alpha} \right)} 
		= 2 \left(1 - \tfrac{1}{175} \right) e^{- \tfrac{\pi \alpha}{4 y}}.
	\end{equation}
	We estimate the right-hand side of \eqref{eq_proof_xpm} in the following way;
	\begin{equation}
		\sum_{l \geq 2} l^2 e^{-\pi \alpha y (l^2 - 1)} \leq \sum_{l \geq 2} l^2 e^{- \tfrac{\pi \alpha}{2 y} (l^2 - 1)} = e^{- \tfrac{\pi \alpha}{4 y}} \sum_{l \geq 2} l^2 e^{- \tfrac{\pi}{2} \left(l^2 - \tfrac{3}{2} \right)}.
	\end{equation}
	In this case, the result now follows because
	\begin{equation}
		2 \left(1 - \tfrac{1}{175} \right) > 0.0788803 \ldots = \sum_{l \geq 2} l^2 e^{- \tfrac{\pi}{2} \left(l^2 - \tfrac{3}{2} \right)}.
	\end{equation}
	
	\textit{Second case} ($\alpha \leq y$):
	In this case we use the assumption that $\alpha \geq 1$, which gives the fact that 
	\begin{equation}
		\alpha y \geq \alpha^2 \geq 1.
	\end{equation}
	The left-hand side of \eqref{eq_proof_xpm} is
	\begin{equation}
		\frac{A_2 \left( \tfrac{y}{\alpha} \right)}{B_2 \left( \tfrac{y}{\alpha} \right)} = \frac{1- \tfrac{1}{175}}{1+ \tfrac{1}{55}} > 0.97,
	\end{equation}
	whereas for the right-hand side of \eqref{eq_proof_xpm} we have
	\begin{equation}
		\sum_{l \geq 2} l^2 e^{- \pi \alpha y (l^2-1)} \leq \sum_{l \geq 2} l^2 e^{- \pi (l^2-1)} < 0.0003228.
	\end{equation}
	This finishes the proof.
\end{proof}
\begin{corollary}[First Main Lemma]\label{cor_Main1}
	Let $\alpha > 0$ be fixed, $x \in \left(0, \tfrac{1}{2} \right)$ and $y \geq \tfrac{1}{\sqrt{2}}$, then
	\begin{equation}
		\dfrac{\partial}{\partial x} \theta_\L^c(\alpha) > 0
		\qquad \textnormal{ and } \qquad
		\dfrac{\partial}{\partial x} \theta_\L^\pm (\alpha) > 0.
	\end{equation}
\end{corollary}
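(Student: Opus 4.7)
The plan is to combine Lemmas \ref{lem_Main1_c} and \ref{lem_Main1_pm}, which together handle the range $\alpha \geq 1$, with the functional equation \eqref{eq_fcteqtheta} to cover the complementary range $\alpha \in (0,1)$. The key point is that, although the functional equation coming from the Poisson summation formula does not leave each of $\theta_\L^c$ and $\theta_\L^\pm$ invariant (as happened in Montgomery's setting for $\theta_\L$), it pairs them with each other: $\theta_\L^c(\alpha)=\alpha^{-1}\theta_\L^\pm(1/\alpha)$ and hence, equivalently, $\theta_\L^\pm(\alpha)=\alpha^{-1}\theta_\L^c(1/\alpha)$. This symmetry is exactly what allows the two lemmas to bootstrap each other.

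Concretely, first I would note that for $\alpha\geq 1$ the two strict inequalities are just restatements of Lemmas \ref{lem_Main1_c} and \ref{lem_Main1_pm}. Then, for $\alpha\in(0,1)$, I would fix $(x,y)$ with $x\in(0,1/2)$ and $y\geq 1/\sqrt{2}$ and differentiate the identity $\theta_\L^c(\alpha)=\alpha^{-1}\theta_\L^\pm(1/\alpha)$ with respect to $x$. Since the prefactor $1/\alpha$ does not depend on $x$, I obtain
\begin{equation}
\frac{\partial}{\partial x}\theta_\L^c(\alpha)=\frac{1}{\alpha}\frac{\partial}{\partial x}\theta_\L^\pm\!\left(\tfrac{1}{\alpha}\right),
\end{equation}
and because $1/\alpha>1$, Lemma \ref{lem_Main1_pm} (applied at parameter $1/\alpha$ with the same $(x,y)$) ensures the right-hand side is strictly positive. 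Symmetrically, differentiating $\theta_\L^\pm(\alpha)=\alpha^{-1}\theta_\L^c(1/\alpha)$ in $x$ and invoking Lemma \ref{lem_Main1_c} at parameter $1/\alpha$ yields $\partial_x\theta_\L^\pm(\alpha)>0$ for $\alpha\in(0,1)$.

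There is essentially no hard step here: the only thing to check is that the conditions $x\in(0,1/2)$ and $y\geq 1/\sqrt{2}$ are preserved under the functional equation, which is immediate since these are purely lattice parameters that do not change when $\alpha$ is replaced by $1/\alpha$. The conceptual content of the corollary is therefore entirely contained in the two preceding lemmas, whose quantitative estimates had to be carried out separately precisely because neither $\theta^c$ nor $\theta^\pm$ is self-dual. Once both are known for $\alpha\geq 1$, the functional equation transports each result to the dual range for the other function, completing the proof for all $\alpha>0$.
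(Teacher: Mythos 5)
Your argument is correct and is exactly the paper's proof: the authors likewise invoke Lemmas \ref{lem_Main1_c} and \ref{lem_Main1_pm} for $\alpha\geq 1$ and use the functional equation $\theta_\L^c(\alpha)=\alpha^{-1}\theta_\L^\pm(1/\alpha)$ to transfer each statement to the range $\alpha<1$ via the other function. You have merely written out the differentiation step that the paper leaves implicit.
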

\begin{proof}
	The result now follows easily from Lemma \ref{lem_Main1_c} and \ref{lem_Main1_pm} by using the functional equation
	\begin{equation}
		\theta_\L^c (\alpha) = \tfrac{1}{\alpha} \theta_\L^\pm \left( \tfrac{1}{\alpha} \right).
	\end{equation}
\end{proof}

\section{The second main lemma}\label{sec_Main2}

Now, we want to show our second main lemma, stating that the partial derivative with respect to $y$ of the centered lattice theta function is negative, which implies the maximality of the hexagonal lattice by combining it with the previous lemma. More precisely, we show the following result.
\begin{lemma}[Second Main Lemma]\label{lem_Main2}
	Let $\alpha > 0$ be fixed, $x = \frac{1}{2}$ and $y \geq \frac{\sqrt{3}}{2}$, then
	\begin{equation}
		\frac{\partial}{\partial y} \, \theta_\L^\pm(\alpha) < 0
		\qquad \textnormal{ and } \qquad
		\frac{\partial}{\partial y} \, \theta_\L^c(\alpha) < 0.
	\end{equation}
\end{lemma}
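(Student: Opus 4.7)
The plan is to exploit the fact that, at $x=1/2$, both $\theta_\L^\pm(\alpha)$ and $\theta_\L^c(\alpha)$ factor as products of one-dimensional classical Jacobi theta functions with arguments depending on $y$. Once this factorization is in place, Proposition \ref{pro_fauste} immediately yields the claimed monotonicity.

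\textbf{Step 1 (the $\pm$ case).} At $x=1/2$ the lattice quadratic form becomes $k^2+kl+(\tfrac{1}{4}+y^2)l^2 = (k+l/2)^2 + y^2 l^2$. I would split the sum defining $\theta_\L^\pm(\alpha)$ according to the parity of $l$. For even $l = 2m$, the shift $k\mapsto k-m$ produces a clean decoupling of the inner sum from the outer one which, after reindexing, equals $\theta_4(\alpha/y)\,\theta_4(4\alpha y)$. For odd $l = 2m+1$, the same shift leaves the inner sum $\sum_{k\in\Z}(-1)^k e^{-\pi t (k+1/2)^2}$ with $t=\alpha/y$, and this vanishes by pairing $k$ with $-k-1$. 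Hence
$$\theta_\L^\pm(\alpha)\big|_{x=1/2} = \theta_4(\alpha/y)\,\theta_4(4\alpha y).$$
Writing $t = 2\alpha$ and $Y=2y$ makes this equal to $\theta_4(t/Y)\,\theta_4(tY)$, and the hypothesis $y \geq \sqrt{3}/2$ forces $Y \geq \sqrt{3} > 1$. Proposition \ref{pro_fauste} then gives $\partial_Y[\theta_4(tY)\theta_4(t/Y)] < 0$, and the chain rule concludes this half.

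\textbf{Step 2 (the $c$ case).} Setting $K = k+\tfrac12$, $L = l+\tfrac12$ in the exponent of \eqref{eq_theta_c} at $x=1/2$ yields $(K+L/2)^2+y^2 L^2$. For each fixed $l$, the inner sum over $k$ is a shifted one-dimensional Gaussian sum $\vartheta(\beta_l;\alpha/y)$, where after absorbing integer shifts $\beta_l = 3/4$ when $l$ is even and $\beta_l = 1/4$ when $l$ is odd. By Lemma \ref{lem_1/4} both cases yield the common value $\vartheta(1/4;\alpha/y)$, while the outer factor is $\sum_{l\in\Z} e^{-\pi\alpha y(l+1/2)^2} = \theta_2(\alpha y)$. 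Combining these and applying Lemma \ref{lem_1/4_theta2} I obtain
$$\theta_\L^c(\alpha)\big|_{x=1/2} = \tfrac{1}{2}\,\theta_2(\alpha/(4y))\,\theta_2(\alpha y).$$
With $t = \alpha/2$ and $Y = 2y$ this becomes $\tfrac{1}{2}\,\theta_2(t/Y)\,\theta_2(tY)$, and Proposition \ref{pro_fauste} again supplies the strict monotonicity in $Y$, hence in $y$, throughout $Y > 1$.

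\textbf{Where the difficulty sits.} The main non-routine step is the algebraic identification of the two factorizations, in particular noticing the cancellation of the odd-$l$ contribution in Step 1 and the Lemma \ref{lem_1/4}/\ref{lem_1/4_theta2} identification that collapses the two shift regimes in Step 2. Once these identities are written down, no delicate quantitative estimates of the kind needed in Section \ref{sec_Main1} are required: Proposition \ref{pro_fauste} already encodes the required monotonicity for products of 1D theta-2 and theta-4 functions. I also note that the hypothesis $y\geq\sqrt{3}/2$ is only used to ensure $Y=2y>1$, so the argument actually works throughout $y > 1/2$; the stated bound is simply the one needed later when combining with the first main lemma at the hexagonal lattice.
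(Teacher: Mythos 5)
Your proof is correct and follows essentially the same route as the paper: at $x=\tfrac12$ the theta function factors into a product of one-dimensional Jacobi theta functions evaluated at $\alpha y$ and $\alpha/(4y)$ (equivalently $tY$ and $t/Y$ with $Y=2y$), and Proposition \ref{pro_fauste} then delivers the strict monotonicity for $Y>1$; your derivation of $\theta_\L^c(\alpha)\big|_{x=1/2}=\tfrac12\,\theta_2(\alpha/(4y))\,\theta_2(\alpha y)$ via Lemmas \ref{lem_1/4} and \ref{lem_1/4_theta2} is exactly the paper's identity \eqref{eq_thetacy1/2}. The only genuine variation is your treatment of the $\pm$ case: the paper deduces it from the $c$ case through the functional equation $\theta_\L^c(\alpha)=\tfrac{1}{\alpha}\theta_\L^\pm\left(\tfrac{1}{\alpha}\right)$, whereas you prove the factorization $\theta_\L^\pm(\alpha)\big|_{x=1/2}=\theta_4(\alpha/y)\,\theta_4(4\alpha y)$ directly — the vanishing of the odd-$l$ block is correct (it is the identity $\vartheta_2\left(\tfrac12;t\right)=0$, seen by pairing $k$ with $-k-1$) — and then invoke the $\theta_4$ half of Proposition \ref{pro_fauste}, which the paper's proof never actually needs. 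Both versions are valid, and your closing remark that $y\ge\tfrac{\sqrt3}{2}$ is only used to guarantee $2y>1$ agrees with the paper's observation that the expression is in fact decreasing for all $y>\tfrac12$.
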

\begin{proof}
	It is sufficient to show the lemma for $\theta_\L^c(\alpha)$ since the other result follows by the functional equation 
	\begin{equation}
		\theta_\L^c (\alpha) = \tfrac{1}{\alpha} \theta_\L^\pm \left( \tfrac{1}{\alpha} \right).
	\end{equation}
	For $\alpha > 0$ and $x = \frac{1}{2}$, we can write
	\begin{align}
		\theta^c_\L(\alpha) & = \sum_{k,l \in \Z} e^{- \tfrac{\pi \alpha}{y} \left( \left( k+\tfrac{1}{2} \right)^2 + \left( k+\tfrac{1}{2} \right)\left( l+\tfrac{1}{2} \right) + \left(\tfrac{1}{4} + y^2 \right) \left( l+\tfrac{1}{2} \right)^2 \right)}\\
		& = \sum_{k,l \in \Z} e^{- \pi \alpha \left( \tfrac{1}{y} \left( k + \tfrac{1}{2} + \frac{l + \tfrac{1}{2}}{2} \right)^2 + y \left( l + \tfrac{1}{2} \right)^2 \right)}\\
		& = \sum_{k,l \in \Z} e^{- \pi \alpha \left( \tfrac{1}{4y} \left( 2k + 1 + l + \tfrac{1}{2} \right)^2 + y \left( l + \tfrac{1}{2} \right)^2 \right)}.
	\end{align}
	By shifting the index $l$ by 1, we get
	\begin{align}
		\theta^c_\L(\alpha) & = \sum_{k,l \in \Z} e^{- \pi \alpha \left( \tfrac{1}{4y} \left( 2k + l + \tfrac{1}{2} \right)^2 + y \left( l - \tfrac{1}{2} \right)^2 \right)}\\
		& = \sum_{l \in \Z} \left( e^{- \pi \alpha y \left( l - \tfrac{1}{2} \right)^2} \sum_{k \in \Z} e^{-\pi \alpha \tfrac{1}{4 y} \left( 2k + l + \tfrac{1}{2}\right)^2} \right).
	\end{align}
	We will now distinguish the cases where $l$ is even and where $l$ is odd. Then, we can write
	\begin{align}
		\theta^c_\L(\alpha) & = \sum_{l' \in \Z} e^{-\pi \alpha y \left( 2l' - \tfrac{1}{2} \right)^2} \sum_{k \in \Z} e^{-\pi \alpha \tfrac{1}{4 y} \left( 2(k+l') + \tfrac{1}{2} \right)^2}\\
		& \quad + \sum_{l' \in \Z} e^{-\pi \alpha y \left( 2l' + 1 - \tfrac{1}{2} \right)^2} \sum_{k \in \Z} e^{-\pi \alpha \tfrac{1}{4 y} \left( 2(k+l') + 1 + \tfrac{1}{2} \right)^2}\\
		& = \sum_{l' \in \Z} e^{-\pi 4 \alpha y \left( l' - \tfrac{1}{4} \right)^2} \sum_{k \in \Z} e^{-\pi \alpha \tfrac{1}{y} \left( k+l' + \tfrac{1}{4} \right)^2}\\
		& \quad + \sum_{l' \in \Z} e^{-\pi 4 \alpha y \left( l' + \tfrac{1}{4} \right)^2} \sum_{k \in \Z} e^{-\pi \alpha \tfrac{1}{y} \left( k+l' + \tfrac{3}{4} \right)^2}.
	\end{align}
	By introducing the new summation index $k'=k+l'$, which runs through all of $\Z$, we obtain
	\begin{align}
		\theta^c_\L(\alpha) & = \sum_{l' \in \Z} e^{-\pi 4 \alpha \left( l' - \tfrac{1}{4} \right)^2} \sum_{k' \in \Z} e^{-\pi \alpha \tfrac{1}{y} \left( k' + \tfrac{1}{4} \right)^2}\\
		& \quad + \sum_{l' \in \Z} e^{-\pi 4 \alpha \left( l' + \tfrac{1}{4} \right)^2} \sum_{k' \in \Z} e^{-\pi \alpha \tfrac{1}{y} \left( k' + \tfrac{3}{4} \right)^2}\\
		& = \vartheta \left( -\tfrac{1}{4}; 4 \alpha y \right) \vartheta \left( \tfrac{1}{4}; \tfrac{\alpha}{y} \right) + \vartheta \left( \tfrac{1}{4}; 4 \alpha y \right) \vartheta \left( \tfrac{3}{4}; \tfrac{\alpha}{y} \right),
	\end{align}
	where we used the notation from \eqref{eq_hk_shift}. By using Lemma \ref{lem_1/4} and Lemma \ref{lem_1/4_theta2} we obtain
	\begin{equation}\label{eq_thetacy1/2}
		\theta^c_\L(\alpha) = 2 \, \vartheta \left( \tfrac{1}{4}; 4 \alpha y \right) \vartheta \left( \tfrac{1}{4}; \tfrac{\alpha}{y} \right) = \frac{1}{2} \, \theta_2(\alpha y) \theta_2 \left( \tfrac{\alpha}{4y} \right).
	\end{equation}
	By a change of variables (for instance $z=2y$) and the result in \cite{FaulhuberSteinerberger_Theta_2017}, which is Proposition \ref{pro_fauste} in this work, we see that the last expression is maximal for $y = \tfrac{1}{2}$ (i.e., $z=1$) and that it is decreasing for $y > \tfrac{1}{2}$ (and increasing before). As $y \geq \tfrac{\sqrt{3}}{2} > \frac{1}{2}$ by assumption, the proof is finished. 
\end{proof}
We believe that Lemma \ref{lem_Main2} remains true if we replace the condition $x = \frac{1}{2}$ by $x \in [0, \tfrac{1}{2}]$ ($x$ fixed) and $y^2 \geq 1 - x^2$. For $x = 0$ the statement holds by the results of Faulhuber and Steinerberger \cite{FaulhuberSteinerberger_Theta_2017} and our numerics support the above statement for some other values of $x$. This would give a complete picture of the behavior of the studied family of lattice theta functions. However, we do not need a more general result in order to prove Theorem \ref{thm_main}.

\section{Proof of Main Result}
We will briefly collect the results we obtained so far and stack them together to a complete proof of Theorem \ref{thm_main}. We start by noting that
\begin{equation}
	\theta_{\mathsf{A}_2}^c(\alpha) \geq \theta_\L^c(\alpha), \quad \alpha > 0.
\end{equation}
In order to see this we note that the First Main Lemma (Corollary \ref{cor_Main1}) tells us that the parameters of the maximizing lattice
\begin{equation}
	\L = y^{-1/2}
	\begin{pmatrix}
		1 & x\\
		0 & y
	\end{pmatrix} \Z^2
\end{equation}
must be found on the right (and due to symmetry on the left) boundary line of the fundamental domain $D$ defined in \eqref{eq_D}, i.e., $x = \frac{1}{2}$. Since we are on the right boundary line of $D$, we necessarily have $y \geq \frac{\sqrt{3}}{2}$. Now, by the Second Main Lemma (Lemma \ref{lem_Main2}), the value of $\theta_\L^c(\alpha)$ is decreasing for $x = \frac{1}{2}$ and $y \geq \frac{\sqrt{3}}{2}$ increasing. Hence, the lattice maximizing $\theta_\L^c(\alpha)$ has parameters $(x,y) = (\frac{1}{2}, \frac{\sqrt{3}}{2})$, which gives the hexagonal lattice. By the preliminaries in Section \ref{sec_Intro} (see also Section \ref{sec:parameters}), the only other lattices maximizing $\theta_\L^c$ are obtained by choosing a different basis or by rotation.

For the function $\theta_\L^\pm(\alpha)$, we now actually only have to use the functional equation
\begin{equation}
	\theta_\L^\pm(\alpha) = \tfrac{1}{\alpha} \theta_\L^c(\tfrac{1}{\alpha}).
\end{equation}
Nonetheless, we had to use the functional equation already earlier in order to deduce the result for $\theta_\L^c$, which justifies stating the result for $\theta_\L^\pm$ additionally.

\section{Universal Optimality of the Hexagonal Lattice}\label{sec_univopt}
We are now going to prove Corollary \ref{cor_cm}, which yields the universal optimality of the hexagonal lattice for Madelung-like lattice energies. We basically follow the lines of \cite[Prop. 3.1]{Bet16}. The result follows from Theorem \ref{thm_main} and an application of the Hausdorff-Bernstein-Widder Theorem \cite{bernstein}. The latter states that $f$ is completely monotone if and only if $f$ is the Laplace transform of a non-negative Borel measure $\mu_f$, i.e. 
\begin{equation}\label{eq_Laplace}
	f(r)=\int_0^\infty e^{-rt} d\mu_f(t).
\end{equation}
The condition $|f(r)| = O(r^{-1-\varepsilon})$ ensures the absolute summability of $E_f^\pm[\L]$ and $E_f^c[\L]$ for any 2-dimensional lattice $\L$. This implies that 
\begin{equation}\label{eq_representtheta}
	E_f^\pm[\L]=\int_0^\infty \left( \theta_\L^\pm \left( \tfrac{\alpha}{\pi} \right) - 1 \right) \, d\mu_f(\alpha),\quad E_f^c[\L]=\int_0^\infty \theta_\L^c \left( \tfrac{\alpha}{\pi} \right)  \, d\mu_f(\alpha).
\end{equation}
Since $\mu_f\geq 0$, the optimality proved in Theorem \ref{thm_main} for the  2-dimensional alternating and centered lattice theta functions $\theta_\L^\pm(\alpha)$ and $\theta_\L^c(\alpha)$, for any value of the parameter $\alpha>0$, implies the same optimality of the above lattice energies.

For showing that $E_f^\pm[\L]<0$ for all completely monotone function $f$ and all 2-dimensional lattices $\L$ of unit volume, it is sufficient, according to \eqref{eq_representtheta} and Theorem \ref{thm_main}, to prove that
$$
\theta_{\mathsf{A}_2}^\pm \left( \alpha \right) < 1, \quad \forall \alpha>0.
$$
This statement is equivalent to
\begin{equation}
	\theta_{\mathsf{A}_2}^c(\alpha)<\frac{1}{\alpha}, \quad \forall \alpha>0,
\end{equation}
via the functional equation \eqref{eq_fcteqtheta}. From \eqref{eq_thetacy1/2} applied to $y=\frac{\sqrt{3}}{2}$, we know that
$$
\theta_{\mathsf{A}_2}^c(\alpha)=\frac{1}{2}\theta_2\left(\frac{\sqrt{3}\alpha}{2}  \right)\theta_2\left(\frac{\alpha}{2\sqrt{3}}  \right).
$$
We now write $\theta_2$ in terms of $\theta_4$ using the functional equation \eqref{eq_fcteqtheta24} and we obtain
$$
\theta_{\mathsf{A}_2}^c(\alpha)=\frac{1}{\alpha}\theta_4\left( \frac{2}{\sqrt{3}\alpha}\right)\theta_4\left(\frac{2\sqrt{3}}{\alpha} \right)<\frac{1}{\alpha}.
$$
By using the product representation (see, e.g., \cite{WhiWat69})
\begin{equation}
	\theta_4(t) = \prod_{k \geq 1} (1-e^{-2 \pi k t})(1-e^{-(2k-1) \pi t})^2
\end{equation}
we see that $\theta_4(t)<1$ for all $t>0$, which completes the proof.

We remark that Corollary \ref{cor_cm} holds for any completely monotone function $f$ without integrability restriction, if we re-define the energies $E_f^\pm$ and $E_f^c$ using the Ewald summation method. This has, for instance, been carried out in \cite{BetKnu_Born_18}, by writing
	\begin{equation}
		\widetilde{E}_f^\pm[\L]:=\lim_{\varepsilon \to 0}\sum_{\substack{(k,l)\in \Z^2\\(k,l)\neq (0,0)}} (-1)^{k+l}f(|kv_1+l v_2|^2)e^{-\varepsilon |kv_1+l v_2|^2 } 
	\end{equation}
	and 
	\begin{equation}
		\widetilde{E}_f^c[\L]:=\lim_{\varepsilon \to 0}\sum_{(k,l)\in \Z^2 } f(|kv_1+l v_2+c|^2)e^{-\varepsilon |kv_1+l v_2+c|^2 }.
	\end{equation}
	Therefore, by \eqref{eq_Laplace} it is possible to write both energies in terms of the alternating and centered lattice theta functions.
	Then, by using the optimality of the hexagonal lattice for all $\alpha>0$, we arrive at the desired optimality result. However, even though the summation for computing $\widetilde{E}_f^\pm$ and $\widetilde{E}_f^c$ coincides with $E_f^\pm$ and $E_f^c$ when $f$ satisfies the integrability assumption of Corollary \ref{cor_cm}, the conditional summability in the non-integrable case means that, maybe, different optimality results could hold if we use another summation method. That is why we have chosen to state our corollary only in the absolutely summable case.

Furthermore, Corollary \ref{cor_cm} directly applies to the potential $f(r)=r^{-s/2}$ where $s>2$. We call the resulting energies the alternating and centered Epstein zeta functions, defined by
\begin{equation}\label{eq_Epstein}
	\zeta_\L^\pm(s):=\sum_{\substack{(k,l)\in \Z^2\\(k,l)\neq (0,0)}} \frac{(-1)^{k+l}}{|k v_1+l v_2|^{s}}
	\quad \textnormal{and} \quad
	\zeta_\L^c(s):=\sum_{(k,l)\in \Z^2}\frac{1}{|k v_1+l v_2+c|^{s}}.
\end{equation}
The maximality of these modifications of the Epstein zeta function for the hexagonal lattice is included in our Corollary \ref{cor_cm}. The alternating Epstein zeta function $\zeta_\L^\pm(s)$, defined by \eqref{eq_Epstein}, already appeared in \cite{BFK20} where we investigated the optimality of the \textit{rock-salt structure} among lattices with an alternating distribution of charges and inverse power laws interactions. The lattice energy under consideration in \cite{BFK20} has the form
\begin{equation}
	E_{p,q,\rho}[\L]:= \zeta_\L(p) + \rho^{-1}\zeta_\L^\pm(q),\quad p>q.
\end{equation}
Here, the factor $\rho$ reflects the density of the system. Since $\zeta_\L^\pm(s)$ is the main term of $E_{p,q,\rho}$ as $\rho \to 0$, Corollary \ref{cor_cm} rigorously shows that the hexagonal lattice is the maximizer of this type of energy model in the low density limit. We observed this fact numerically already in \cite[Fig.~7(d)]{BFK20}. Hence, Corollary \ref{cor_cm} contains important information about ionic crystal energies when the charges are alternating.

\section{Appendix: More about parametrization of Lattices}\label{sec:parameters}
In this section we are going to explain how 2-dimensional lattices can be parametrized by one complex number in the Siegel upper half-plane $\H$, and, also, what are the precise geometrical meanings of the parameters $x$ and $y$. We mostly follow the textbook by Serre \cite[Chap.~VII §1]{Serre} in the beginning.

The Siegel upper half-plane is defined as
\begin{equation}
	\H = \{ \tau \in \C \mid \Im(\tau) > 0\}.
\end{equation}
We will write elements in $\H$ as $\tau = x + i y$, where $x$ and $y$ are the real an imaginary part respectively. Also, we will identify $\tau \in \H$ with the vector $(x,y) \in \R^2$ in the canonical way. Recall that, even though we write $(x,y) \in \R^2$, we actually deal with column vectors.

A (full-rank) lattice in $\C$ of volume 1, which can naturally be identified with a lattice in $\R^2$, is given by
\begin{equation}
	\mathcal{L} = |\Im(\overline{\omega_1} \omega_2)|^{-1/2} \left(\omega_1 \Z \times \omega_2 \Z\right),
\end{equation}
with the condition that $\frac{\omega_1}{\omega_2} \notin \R$. The factor $|\Im(\omega_1 \overline{\omega_2})|^{-1/2}$ normalizes the lattice to have density 1. By identifying lattices which result from one another by rotation, we can always find a representative of the form
\begin{equation}
	\mathcal{L}_\tau = \Im(\tau)^{-1/2} \left(\Z \times \tau \Z\right) \subset \C .
\end{equation}

In $\R^2$, a lattice of unit volume is characterized by a matrix in $SL(2,\R)$. By a QR-decomposition, we may write the lattice as
\begin{equation}
	\L = Q S \Z^2,
\end{equation}
where $Q$ is orthogonal and $S$ is an upper triangular matrix of the form
\begin{equation}
	S = y^{-1/2}
	\begin{pmatrix}
		1 & x\\
		0 & y
	\end{pmatrix}
	\quad \textnormal{ and } \quad
	\L = S \Z^2.
\end{equation}
The parameters $x$ and $y$ are called shearing and dilation parameter, respectively. Again, we identify lattice which only differ from one another by a rotation $Q$ and may write
\begin{equation}
	\L_{(x,y)} = y^{-1/2} \,
	\begin{pmatrix}
		1 & x\\
		0 & y
	\end{pmatrix}
	\Z^2 \subset \R^2.
\end{equation}
Clearly, we have an identification
$
	\mathcal{L}_\tau
	\, \longleftrightarrow \,
	\L_{(x,y)}
$.
\begin{figure}[htb]
	\subfigure[A 2-dimensional lattice and its characteristic parameters. The $\ast$ marks the center of the fundamental cell (gray parallelogram), denoted by $c$ in this work.]{
	\includegraphics[width=.35\textwidth]{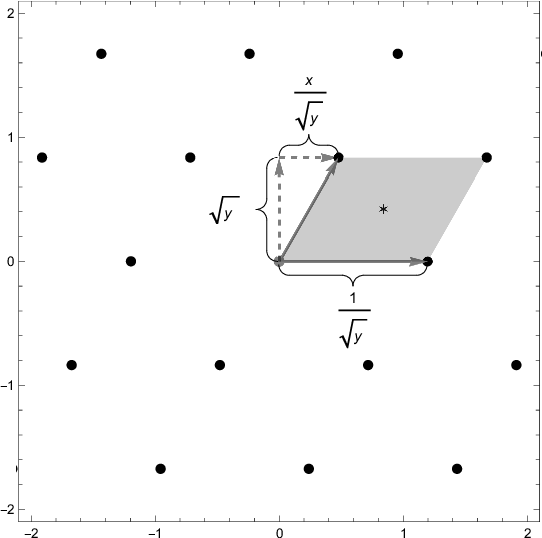}}
	\hfill
	\subfigure[The upper half-plane with the fundamental domain $D$ and some of its copies. The entries show how to obtain the respective domain from $D$, e.g., $T$ means that $T$ has to act on $D$ in order to obtain the domain.]{
	\includegraphics[width=.575\textwidth]{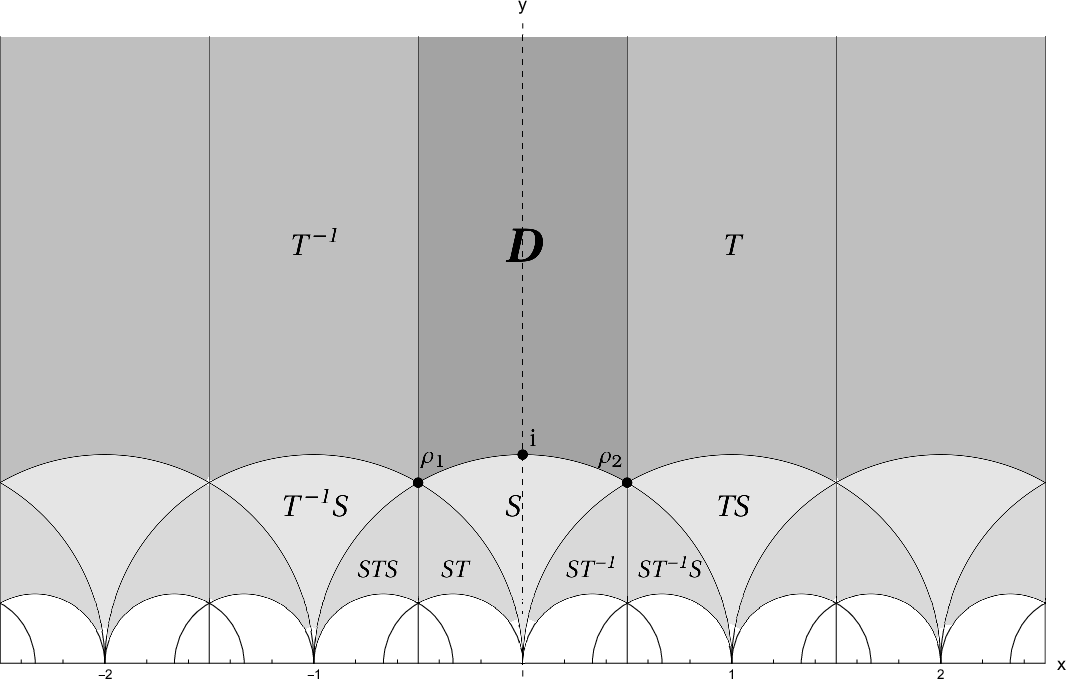}}
	\caption{After a suitable rotation and choice of basis, any lattice $\L_{(x,y)} \subset \R^2$ can be parametrized by a point $\tau \in D \subset \mathbb{H}$.}
	\label{fig_lattice}
\end{figure}

Often, it is necessary to work with a Minkowski or reduced basis. This means that the lattice-generating vectors are the shortest possible. In dimension 2, this is ensured if the generating vectors are derived from a parameter in the so-called fundamental domain $D$, a subset of the upper half-plane $\mathbb{H}$. The fundamental domain $D$ is the following set;
\begin{equation}
	D = \{ \tau \in \mathbb{H} : |\tau| \geq 1, \, |\Re(\tau)| \leq \tfrac{1}{2} \}.
\end{equation}
Due to symmetry reasons, it is sufficient for us to consider the right half of the fundamental domain, i.e., $\{0 \leq \Re(\tau)\} \cap D$.

Next, we make a remark on the modular group $PSL(2, \Z)$, which is the projective special linear group of determinant 1 matrices with integer entries. We note that
\begin{equation}
	PSL(2,\Z) = SL(2,\Z)/\{\pm I\}.
\end{equation}
Geometrically, choosing a matrix $\mathcal{B} \in PSL(2,\Z)$, $\mathcal{B} \neq I$, corresponds to choosing a non-reduced basis for the lattice $\Z^2$. However, we have that $\mathcal{B} \Z^2 = \Z^2$. Also, the group $PSL(2,\Z)$ is generated by the following 2 matrices;
\begin{equation}
	S = \begin{pmatrix}
		0 & 1\\
		-1 & 0
	\end{pmatrix},
	\qquad
	T = \begin{pmatrix}
		1 & 1\\
		0 & 1
	\end{pmatrix}
\end{equation}
Any point $\widetilde{\tau} \in \mathbb{H}$ can be obtained from a unique point $\tau \in D$ (up to the points on the boundary line $\Re(\tau) = \pm \tfrac{1}{2}$) and the action of the matrices $S$ and $T$.

The fundamental domain $D$ and the action of the modular group are illustrated in Figure \ref{fig_lattice}. The special point $i$ corresponds to the square lattice ($x=0$, $y=1$) and the points $\rho_1$ and $\rho_2$ correspond to the triangular lattice ($x=\pm \tfrac{1}{2}$, $y=\tfrac{\sqrt{3}}{2}$). The purely imaginary points, i.e., points on the line $x=0$, give rectangular lattices, meaning that their fundamental cell is a rectangle. Points lying on the boundary of $D$, i.e., $|x| = \tfrac{1}{2}$ or $x^2+y^2=1$, give rhombic lattices, meaning that we can find generating vectors of equal length (which do not necessarily yield a reduced basis). For more details on the modular group and the fundamental domain we refer to the textbook of Serre \cite[Chap.~VII]{Serre}.

\begin{figure}[!htb]
	\subfigure[The square lattice with alternate charge distribution. The fundamental cell is spanned by the standard basis and its center is marked by $\ast$.]{
		\includegraphics[width=.35\textwidth]{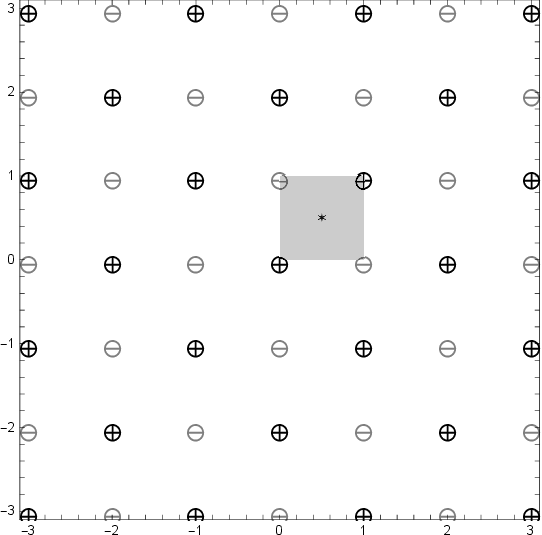}
	}
	\hspace{2cm}
	\subfigure[The square lattice where we chose the basis given by $T$. The charges are alternating with respect to the chosen basis. The fundamental cell is deformed by the action of $T$ and its center is marked by $\ast$.]{
		\includegraphics[width=.35\textwidth]{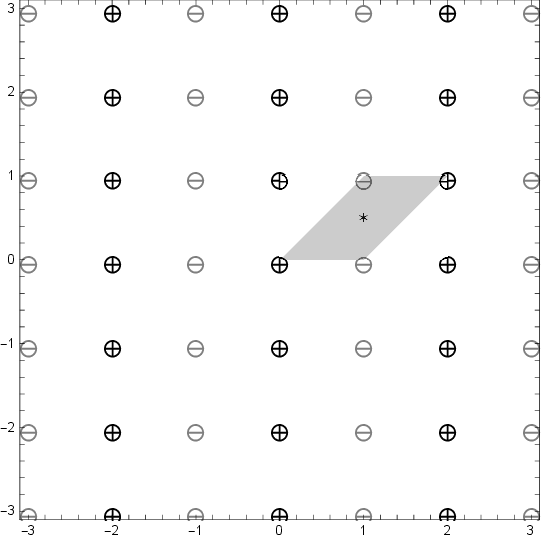}
	}
	\caption{Two different configurations of charges for the square lattice. We used the standard basis and the basis given by $T$. This illustrates the importance of the choice of basis.}
	\label{fig_non_optimal_charge}
\end{figure}
We remark that, by using the full upper half-plane $\mathbb{H}$ to index lattices, our model allows non-optimally charged lattice configurations if we put alternating charges on a lattice with respect to the generating vectors (see Figure \ref{fig_non_optimal_charge}). For example, if we choose $\tau = 1 + i$, which corresponds to the lattice $T \Z^2 = \Z^2$, then the generating vectors are $(1,0)$ and $(1,1)$. Note that they do not form a Minkowski reduced basis. Now, denote the shearing matrix by
\begin{equation}
	V_x = \begin{pmatrix}
		1 & x\\
		0 & 1
	\end{pmatrix},
	\quad x \in \R.
\end{equation}
If $V_x$ acts on the square lattice, i.e., $\L_x = V_x \Z^2$, and we continuously increase the shearing from 0 to 1, we end up with lattice $T \Z^2 = \Z^2$. However, the charged lattice will not be the alternating charged square lattice. Shearing the lattice by $x = 1$, will align the charges on vertical lines and the lines will be alternately charged  as in Figure \ref{fig_non_optimal_charge}. This is due to the fact that for $ x = \tfrac{1}{2}$ we have two competing alternate charge configurations, which both yield the same energy.

The alignment of the charges illustrated in Figure \ref{fig_non_optimal_charge} results from a non-canonical choice of basis for $\Z^2$. If we say that the fundamental cell is the parallelogram spanned by the vectors of the basis, then our model is not independent of the choice of basis because the new center is in general be a shift of the old center by $(m,n) \in \tfrac{1}{2} \Z \times \tfrac{1}{2} \Z$. Nonetheless, the center of a cell will always be the intersection point of the diagonals of the resulting parallelogram. This leads to the idea of using a Delaunay triangulation, introduced by Delaunay in \cite{Del34}, to define non-lattice points with special geometric properties. A Delaunay triangulation $\Delta$ is a special type of triangulation. It is nowadays a common procedure to build a triangular mesh out of a point set, seeking to maximize the smallest angle in all triangles. Delaunay triangulation is widely used in computer graphics and we refer to any modern textbook which treats this topic for further information.

\begin{figure}[!htb]
	\subfigure[Delaunay triangulation for a hexagonal lattice.]{
		\includegraphics[width=.3\textwidth]{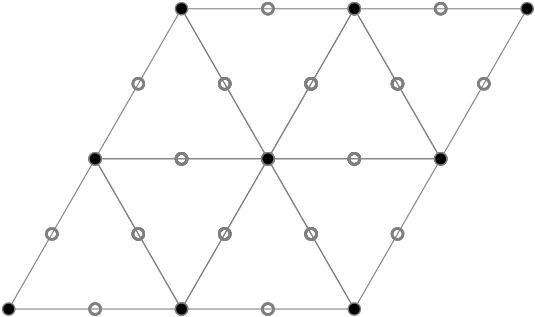}
	}
	\hfill
	\subfigure[Delaunay triangulation for the square lattice.]{
		\includegraphics[width=.19\textwidth]{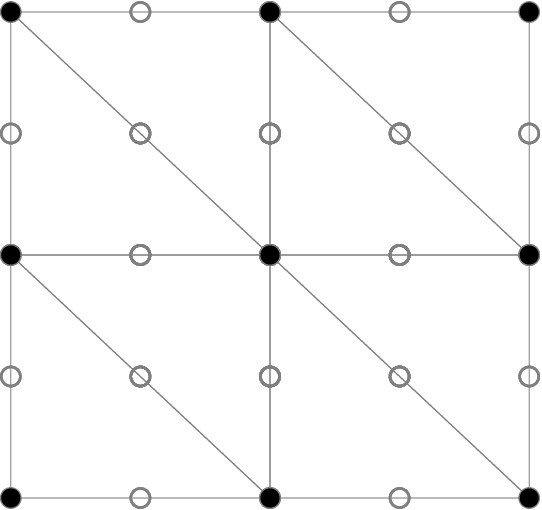}
	}
	\hfill
	\subfigure[Delaunay triangulation for a general lattice.]{
		\includegraphics[width=.3\textwidth]{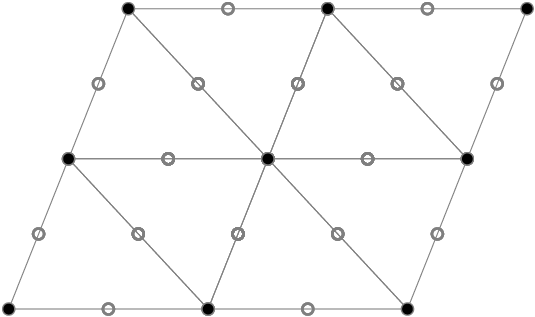}
	}
	\caption{Delaunay triangulations for different lattices. The lattice points are marked by $\bullet$. The \textcolor{gray}{$\circ$} marks the midpoint of an edge between two points which are vertices of a Delaunay triangle. Furthermore, each \textcolor{gray}{$\circ$} is the center of a fundamental cell, depending on the choice of basis we make.}\label{fig_delaunay_latt}
\end{figure}
In Figure \ref{fig_delaunay_latt}, we see Delaunay triangulations for different lattices. We note that, for general point configurations, a Delaunay triangulation is not unique. This can already be seen in Figure \ref{fig_delaunay_latt} (b), where the choice of the diagonal in the square can be made arbitrarily. Together with Figure \ref{fig_delaunay_latt}, this leads us to the conjectures for general point sets of fixed density and alternating and centered $f$-energies.

For a finite general point configuration with an even number of points $N$, we place $\oplus$ and $\ominus$ charges at the points, such that the complete configuration has neutral charge. Then we minimize among all possible (neutral) arrangements for a fixed configuration $X_N$ and seek to find the configuration of points $X_N$ which has least energy at fixed density. For infinite point configurations, we use the thermodynamic limit setting as in \cite[Sect.~9]{CohKum07}. The precise mathematical statement is as follows. For any configuration $X_N=\{x_1,...x_N\}\subset \R^2$, $N\in 2\N$, we define the energy per point by
\begin{equation}
	E_f^\pm[X_N]=\min_{\varphi}\frac{1}{N} \sum_{i\neq j} \varphi(x_i) \varphi(x_j)f(|x_i-x_j|^2),
\end{equation}
where the minimum is taken among charge distributions $\varphi:X_N\to \{-1,1\}$ such that $\sum_{i=1}^N \varphi(x_i)=0$ (neutrality assumption). We therefore conjecture that, for any infinite configuration of points $\mathcal{C}\subset \R^2$ with density 
\begin{equation}\label{eq_densityrho}
\rho:=\lim_{R\to \infty}\frac{\sharp\{ \mathcal{C}\cap B_R\}}{|B_R|}=1,
\end{equation}
where $B_R$ is the ball of radius $R$ centered at 0, and any completely monotone function $f$ such that $|f(r)|=O(r^{-1-\epsilon})$ as $r\to \infty$ for some $\epsilon>0$, we have
\begin{equation}
	\lim_{R\to \infty} E_f^\pm[\mathcal{C}\cap B_R]\leq E_f^\pm[\mathsf{A}_2].
\end{equation}

For the dual problem, i.e., for the energy at the ``center of a cell", we define the following sets. For a finite set $X_N$, we denote the set of all of its Delaunay triangulations $\Delta$ by $\mathcal{D}_{X_N}$. Second, we denote the set of midpoints of a Delaunay triangulation (see Figure \ref{fig_delaunay_gen}) by
\begin{equation}
	\mathcal{M}_\Delta = \{ p = \tfrac{p_1+p_2}{2} \mid p_1, p_2 \text{ are joined by an edge of the Delaunay triangulation } \Delta \in \mathcal{D}_{X_N} \}.
\end{equation}
Now, for any configuration of points $X_N:=\{x_1,...,x_N\}\subset \R^2$, we define
\begin{equation}
	E_f^c[X_N] = \min_{\Delta \in \mathcal{D}} \min_{m \in \mathcal{M}_\Delta} \sum_{x \in X_N} f(|x-m|^2).
\end{equation}
We therefore conjecture that, for any infinite configuration of points $\mathcal{C}\subset \R^2$ with unit density as in \eqref{eq_densityrho}, and any completely monotone function $f$ such that $|f(r)|=O(r^{-1-\epsilon})$ as $r\to \infty$ for some $\epsilon>0$, we have
\begin{equation}
	\lim_{R\to \infty} E_f^c[\mathcal{C}\cap B_R]\leq E_f^c[\mathsf{A}_2].
\end{equation}

\begin{figure}[hbt]
	\includegraphics[width=.5\textwidth]{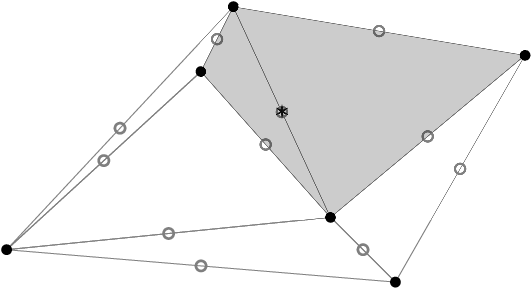}
	\caption{Delaunay triangulation of a (finite) point set. The $\bullet$ mark the points of the configuration and the \textcolor{gray}{$\circ$} mark the midpoints of the edges of a triangle. There is no fundamental cell anymore, but still we may join two triangles with a common edge to form a cell. This cell is not necessarily convex, but it contains a point \textcolor{gray}{$\circ$} (the one on the common edge), which would then be the ``center $\ast$ of the cell".}\label{fig_delaunay_gen}
\end{figure}

\bibliographystyle{plain}

\end{document}